\def\sqr#1#2{{\vbox{\hrule height.#2pt
     \hbox{\vrule width.#2pt height#1pt \kern#1pt
           \vrule width.#2pt}
     \hrule height.#2pt}}}
\newtheorem{Thm}{Theorem}[section]
\newtheorem{Lem}{Lemma}[section]
\newtheorem{Rek}{Remark}[section]
\newtheorem{Def}{Definition}[section]
\numberwithin{equation}{section}
\title{{Existence and Multiplicity of Normalized Solutions for Dirac Equations with non-autonomous  nonlinearities}}
\author{
}
\date{}
\begin{document}
\author{ {~~Anouar Bahrouni$^{a}$, Qi Guo$^{b}$, Hichem Hajaiej$^{c}$ and Yuanyang Yu$^{d}$\footnote{corresponding author: yyysx43@163.com}}\\
\small Mathematics Department, University of Monastir, Faculty of Sciences, 5019 Monastir, Tunisia$^{a}$\\
\small School of Mathematics, Renmin University of China, Beijing, 100872, P.R. China$^{b}$\\
\small Department of Mathematics, California State University, Los Angeles,\\
\small 5151 State University Drive,
Los Angeles, CA 90032, USA$^{c}$\\
\small Department of Mathematical Sciences, Tsinghua University,
Beijing, 100084,  P.R. China$^{d}$ } \maketitle

\noindent{\bf Abstract}\quad In this paper, we study the following
nonlinear Dirac equations
\begin{align*}
\begin{cases}
-i\sum\limits_{k=1}^3\alpha_k\partial_k u+m\beta u=f(x,|u|)u+\omega u,\\
   \displaystyle \int_{\mathbb{R}^3} |u|^2dx=a^2,
    \end{cases}
\end{align*}
where $u: \mathbb{R}^{3}\rightarrow \mathbb{C}^{4}$, $m>0$ is the
mass of the Dirac particle, $\omega\in \mathbb{R}$ arises as a
Lagrange multiplier, $\partial_k=\frac{\partial}{\partial x_k}$,
$\alpha_1,\alpha_2,\alpha_3$ are $4\times 4$ Pauli-Dirac matrices,
$a>0$ is a prescribed constant, and $f(x,\cdot)$ has several
physical interpretations that will be discussed in the Introduction.
Under general assumptions on the nonlinearity $f$, we prove the
existence  of $L^2$-normalized solutions for the above nonlinear
Dirac equations by using perturbation methods in combination with
Lyapunov-Schmidt reduction. We also show  the multiplicity of these
normalized solutions thanks to the multiplicity theorem of Ljusternik-Schnirelmann. Moreover, we obtain bifurcation results of this problem. \\

\noindent{\bf  Keywords} \quad Nonlinear Dirac equations,   Normalized solutions,  Multiplicity. \\
\noindent{\bf 2020 MSC}\quad 35Q40  35J50 49J35.
\numberwithin{equation}{section}

\tableofcontents

\section{Introduction}
This section is divided into two subsections. The first part is
dedicated to the physical motivation of the problem, while the
second part discusses the main results of this paper.
\subsection{ Physical motivation of normalized solutions for Dirac equations}
In this subsection, we focus our attention on the physical
motivation of the existence of normalized solutions for Dirac
equations. We will then point out the complexity and the challenging
aspects of this problem.

\medskip
Dirac derived the now called Dirac operator starting from the usual
classical expression of the energy of a free relativistic particle
of momentum $p\in \mathbb{R}^3$, and mass $m$,
\[E^2=c^2|p|^2+m^2c^4,\]
where $c$ is the speed of light. By means of the usual
identification
\[p\leftrightarrow -i\hbar \nabla,\]
where $\hbar$ is Planck's constant. The Dirac operator is of  first
order and has the following form:
\begin{equation}\label{dir}
D_c=-ic\hbar \alpha\cdot \nabla +mc^2 \beta ,
\end{equation}
  where
$\alpha\cdot\nabla=\sum\limits_{k=1}^3 \alpha_k\partial_k$, and
$\partial_k=\frac{\partial}{\partial x_k}$, $\alpha_1$, $\alpha_2$,
$\alpha_3$ and $\beta$ are $4\times 4$ Pauli-Dirac matrices (in
$2\times 2$ blocks):
 \begin{align*}
\alpha_k=\begin{pmatrix}
0 &\sigma_k\\
\sigma_k &0
\end{pmatrix},\beta=\begin{pmatrix}
I &0\\
0 &-I
\end{pmatrix},
\end{align*}
with
\begin{align*}
\sigma_1=\begin{pmatrix}
0 &1\\
1&0
\end{pmatrix}, \sigma_2=\begin{pmatrix}
0 &-i\\
i&0
\end{pmatrix}, \sigma_3=\begin{pmatrix}
1&0\\
0&-1
\end{pmatrix}.
\end{align*}
The time-dependent Dirac equation describing the evolution of a free
particle is given by: \begin{equation}\label{time} i\hbar
\frac{\partial}{\partial t} \Psi(t,x)=D_c\Psi. \end{equation} This
equation has been successfully used in physics to describe the
relativistic fermions, see \cite{Thaller1992book}.
\medskip

By a direct computation, we can prove that $D_1^{2}=-\Delta+m^{2}$,
and therefore equation \eqref{time} can be regarded as a special
case of infinite dimensional Hamiltonian systems, which are called
quantum mechanical systems, see \cite{Dong-Ding-Guo}.
Hence, all the basic theorems about Hamiltonian systems do apply.
If $c=\hbar=1$, the Lagrangian density that describes the local
interaction of the nonlinear Dirac field is given by
\begin{equation}\label{F}
\mathcal{L}_{ND}=\bar{\psi}\left(i \gamma^0\cancel{D}-m \right)\psi
+ F(\bar{\psi},\psi),
\end{equation}
where $\cancel{D}= \partial_t+\displaystyle \sum_{k=1}^{3}\alpha_k
\partial_k$, $\gamma^0=\beta$ and $\bar{\psi}$ represents the Dirac adjoint
of $\psi$.
The external fields  $F(x,\cdot)$ in \eqref{F} arise in mathematical
models of particle physics, especially in nonlinear topics. It is
inspired by approximate descriptions of the external forces that
only involve functions of fields. The nonlinear self-coupling $F(\cdot ,\cdot)$, that describes a self-interaction in quantum electrodynamics,
gives a closer description of many particles found in the real
world, see \cite{ranada}. M. Soler \cite{Soler}, who was the first to investigate the
stationary states of the nonlinear Dirac field with the scalar
fourth order self coupling, proposed \eqref{F} as a model of
elementary extended fermions.
The corresponding Dirac equation was a key development in harmonizing
the Schr\"odinger equation with special relativity, see \cite{ranada}. It is
interesting to note that in the realm of Schr\"odinger equations,
nonlinearities are often observed as approximations in optics and
condensed matter fields. At the quantum mechanics frontier, it is
expected that quantum nonlinearities can be detected at high energy
levels or extraordinarily short distances, with a potential link to
phenomena such as neutrino oscillations, see \cite{ng}. The nonlinear Dirac
equations play a pivotal role in several scientific disciplines,
including atomic, nuclear, and gravitational physics, see
\cite{lazarids,Bjorken,ng}. These equations offer a comprehensive
theoretical framework that portrays the behavior of fermions under
intense gravitational forces, thereby providing deep insights into
the properties of matter in extreme conditions. Furthermore, they
are also employed in condensed matter physics, where they
effectively illustrate the behavior of electrons in specific
materials.
\medskip

Denoting $\mathcal{S}=\displaystyle\int\mathcal{L}_{ND}dxdt$ the total action,
and making the variation of $\mathcal{S}$ with respect to $\delta
\psi$ or $\delta \bar{\psi}$, we get the following nonlinear Dirac
equation
\begin{equation}\label{S}
i\cancel{D}\psi-m \psi+F_{\bar{\psi}}(\bar{\psi},\psi)=0.
\end{equation}
Several novel nonlinear photonic systems currently explored are
modeled by Dirac equations \eqref{S}. Examples include fiber Bragg
gratings \cite{eggleton}, dual-core photonic crystal fibers
\cite{betlej}, and discrete binary arrays, which refer to systems
built as arrays coupled of elements of two types. Earlier
experimental work on binary arrays has already shown the formation
of discrete gap solitons \cite{morandotti}. Three of the many novel
examples that have been recently considered are: a dielectric
metallic waveguide array \cite{aceves}; an array of vertically
displaced binary waveguide arrays with longitudinally modulated
effective refractive index \cite{marini}, and arrays of coupled
parity-time  nanoresonators \cite{lazarids}. Moreover, the nonlinear
Dirac equation \eqref{S} with power nonlinearity has some
applications in spin geometry, such as the prescribed mean curvature
problem,
  see \cite{ammann1}.
  \medskip

The linear Dirac equation \eqref{time} is invariant under wave
function rescaling, a feature in quantum mechanics that allows the
normalization of solutions.
 This normalization is not simply beneficial, but sometimes required for the interpretation of experimental observations. However, nonlinear terms generally do not
  exhibit this scaling invariance. As such, we need to introduce an additional constraint, namely, $$\displaystyle \int_{\mathbb{R}^3} |\psi(t,x)|^2 dx=a^2,$$ to find normalized
  solutions for the nonlinear Dirac equation, where $a>0$ is a prescribed constant. This constraint ensures the validity and reliability of our solutions, thereby advancing our
  understanding of nonlinear Dirac equations.\medskip

   We consider the standard
solitary wave solutions (stationary states) of \eqref{S}, which have
the form
\begin{equation}\label{so}
\psi(t,x)=e^{-i\omega t}u(x).
 \end{equation} Solitary wave solutions
of nonlinear Dirac problem \eqref{S} are considered as particle-like
solutions. These solutions are in some
sense solitons that propagate without changing shape. To simplify the nonlinear model, we assume that the nonlinear term is gauge invariant and set $F:\mathbb{R}^3\times \mathbb{C}^4\rightarrow \mathbb{R}$ satisfying
$F_u(x,u)=\gamma^0
F_{\bar{\psi}}(\bar{u},u)$, then the nonlinear Dirac equation \eqref{S} reads as follows:
\begin{equation}\label{stat}
-i\alpha\cdot\nabla u+m\beta u-\omega u=F_u(x,u).
 \end{equation}

In the past two decades, there has been an increasing interest in
the study of existence of solitary wave solutions for nonlinear
Dirac equations \eqref{stat}.  In \cite{MR2434346,MR847126,
MR968485}, the authors studied the problem \eqref{stat} with  a
nonlinearity of Soler-type and $\omega\in (0,m)$.
 In \cite{MR1071235}, Balabane, Cazenave and V\'{a}zquez showed the existence of stationary
  states with compact support for problem \eqref{stat} with singular nonlinearities.  As a continuation of the last work, Treust  in \cite{MR3070757} studied the existence of nodal
  solutions.
 Then variational methods were applied to search for solutions by Esteban and S\'er\'e for the case  $\omega\in (0,m)$,
 see \cite{Esteban-Sere1995CMP, MR1897689,MR2434346}. 
   Other relevant results can be found in
 \cite{Bartsch-Ding2006JDE,Ding-Ruf2008ARMA,MR2450893}. Another way to find the solutions of nonlinear Dirac equations
 is the use of perturbation theory, see  \cite{MR1750047}.
 \medskip

A particular class of solitary waves is when $u$  given by \eqref{so}
is a normalized solution, meaning that it solves a constrained
minimization problem associated with \eqref{stat}. These types of
solutions have attracted the attention of many research groups
during the last few years. For the classical PDEs studied so far,
the normalized solutions have the best profile as they very often
enjoy nice symmetry, stability, asymptotic, and decay
features. In our context, several photonic systems are modeled by
the Dirac equations under study. The corresponding normalized
solutions can be implemented using photonic states in optical fibers
without excessive quantum complexity. The
normalized solutions are also the best candidates to provide a
simple stable realization of higher-dimensional quantum entanglement
used in cryptography, which is fundamendal to obtain higher security
margin and an increased level of noise at a given level of security,
see \cite{cerf}. Additionally, the normalized solutions of the first
class of Dirac equations under study can be seen as the
representation of the number of particles in a Bose-Einstein
condensate, also known as the fifth matter, see
\cite{Thaller1992book}.

\medskip
Despite the great importance of normalized
solutions in many fields as one can easily tell from the long list
of valuable contributions addressing different aspects of the
normalized solutions (see
\cite{bahrouni,bartsch,bartsch2,Bartsh-Zou,hichem,hs1,hs2,jeanjean,Soave1}),
the literature is quite silent about these solutions for  Dirac
equations. More precisely so far, there are only three contributions
\cite{DingDirac, Nolasco,Zelati} dealing with very particular
nonlinearities.
The situation for Dirac equations is much more challenging than in
all the previous PDEs handled.
 The main difficulty comes from the fact that the corresponding energy functionals
are strongly indefinite since the spectrum of Dirac operators
consists of positive and negative continuous spectrum. Because of that all existing methods do not apply to this setting. Even the recent general approach developped by the third author and L. Song \cite{GHS} seems not to apply to the current setting.
\cite{Buffoni} was the first work devoted to the study of the
existence of normalized solutions for classical elliptic problems in
the complex setting of indefinite functionals  by using a
penalization method in the spirit of \cite{Esteban}. In
\cite{Nolasco}, Nolasco obtained the existence of $L^{2}-$normalized
solitary wave solution for the Maxwell-Dirac equations in
$(3+1)-$Minkowski space. The author used a variational
characterization for critical points of the energy functional,
inspired by the one used to characterize the first eigenvalue of the
Dirac operators with Coulomb-type potentials, see \cite{dolbeaut}.
 \cite{DingDirac} was the first and the only work devoted
to study the existence of normalized solutions for the Dirac problem
\eqref{S}. More precisely, the authors dealt with a particular
nonlinearity and assumed that $F_u(x,u)=K(x) |u|^{p-2}u$, where $p\in
(2,8/3)$ and $K$ is a weight function used to ensure compactness.
The authors presented a new approach that is based on perturbation
arguments. Lastly, let us mention a recent work about the
existence of normalized solution of the linear coupled
Klein-Gordon-Dirac system which describes the interaction
electron-photon analyizing the Euler-Lagrange equations for a system
consisting of a spinor field coupled with a massless scalar field,
see \cite{Zelati}. To the best of our knowledge, there are no
results studying the existence of multiple normalized solutions of
Dirac equations \eqref{stat} with non-autonomous nonlinearities.
This paper aims to fill this gap by giving a new approach and
technics.
 \subsection{Main results}
  In this paper, we focus our attention on the existence and multiplicity of the normalized solutions for the
following nonlinear Dirac equation
\begin{align}\label{eq1.1}
\begin{cases}
    -i\alpha\cdot\nabla u+m\beta u=f(x,|u|)u+\omega
    u\\
     \displaystyle \int_{\mathbb{R}^3} |u|^2dx=a^2,
    \end{cases}
\end{align}
where $a>0$ is a prescribed constant.\medskip

To state our main results, we need the following assumptions:
\begin{itemize}
        \item[$(f_1)$] $f(x,0)=0$ and $f(x,\cdot)\in \mathcal{C}^1(\mathbb{R}^+,\mathbb{R}^+)$ for almost all $x\in\mathbb{R}^3$;
    \item[$(f_2)$]  $f(x,t)>0$  for all $t\in (0,\infty)$ and almost all $x\in \mathbb{R}^3$;
    \item[$(f_3)$] There exist $p,q\in\mathbb{R}$ with
    $2< p \leq q<3$ such that for almost all $x\in\mathbb{R}^3$, the function $t\mapsto \displaystyle\frac{f(x,t)}{t^{q-2}}$(resp. $t\mapsto \displaystyle\frac{f(x,t)}{t^{p-2}}$) is nonincreasing(resp. nondecreasing) on $(0,\infty)$;
    \item[$(f_4)$] $r(x):=f(x,1)\in L^\infty(\mathbb{R}^3)$ and
    \begin{equation*}
\lim_{R\to \infty}\operatorname*{ess\,sup}\limits_{|x|\geq R}r(x)=0;
    \end{equation*}
\item[$(f_5)$]
There exist positive constants $L, t_0$, $\alpha\in \left(0,8/3\right)$
and $\tau\in \left(0,(8-3\alpha)/2\right)$ such that
\begin{equation*}
    F(x,t) \geq L|x|^{-\tau} t^\alpha~~\text{a.e. on}~~S~~\text{and}~~0\leq t\leq t_0,
\end{equation*}
where $F(x,t)=\displaystyle\int_0^tf(x,\tau)\tau d\tau$ and
$S=\left\{tx\mid t\geq 1~\text{and}~x\in B_d(x_0)\right\}$ for some
$x_0\in \mathbb{R}^3$ and $0<d<|x_0|$.
\end{itemize}
\begin{Rek}\label{Rek1.1}
$(i)$   Note that, the conditions $(f_1)$-$(f_4)$ imply that, for
almost all $x\in\mathbb{R}^3$ and $t> 0$
        \begin{equation}\label{eq1.2}
                        \begin{split}
&0<(p-2)f(x,t)\leq f^\prime(x,t)t\bigg(=\frac{\partial f(x,t)}{\partial t}t\bigg)\leq (q-2)f(x,t),\\
&\frac{1}{q}f(x,t)t^2\leq F(x,t)\leq \frac{1}{p}f(x,t)t^2,
            \end{split}
    \end{equation}
and
    \begin{equation}\label{eq1.3}
            \begin{split}
&s^pF(x,t)\leq F(x,st)\leq s^qF(x,t) ~~\text{if}~~s\geq 1,\\
&s^qF(x,t)\leq F(x,st)\leq s^pF(x,t)~~\text{if}~~0<s\leq 1.
        \end{split}
\end{equation}
$(ii)$ Consequently, from $(i)$, we can deduce that
\begin{equation}\label{eq1.4}
                \begin{split}
    &r(x)s^p\leq F(x,s)\leq r(x)s^q~~\text{if}~~s\geq 1,\\
    &r(x)s^q\leq F(x,s)\leq r(x)s^p~~\text{if}~~0<s\leq 1,
                \end{split}
\end{equation}
for almost $x\in \mathbb{R}^{3}$.\\ $(iii)$ It is not difficult to
check, in view of $(i)$ and $(ii)$, that $F(x,\cdot)$ is strictly
convex and
    \begin{equation}\label{eq1.5}
F(x,t)\leq C\left(|t|^p+|t|^q\right),~~\forall t\in \mathbb{R}, \
\mbox{and~for~a.e.} \ x\in \mathbb{R}^{3}.
    \end{equation}
\end{Rek}

Now, we recall the following important definition.
\begin{Def}
 The number
$\omega_0$ is said to be a bifurcation point on the left of the equation
\eqref{eq1.1} if there exists
$$
\left\{\left(\omega_n, u_n\right)\right\} \subset\left\{(\omega,
u) \in \mathbb{R} \times
H^{1/2}\left(\mathbb{R}^3,\mathbb{C}^4\right)\mid -i\alpha \cdot
\nabla u+m\beta u=f(x,|u|)u+\omega u\right\}
$$
such that $\omega_n<\omega_0$ for all $n\in \mathbb{N}$, $\omega_n \rightarrow
\omega_0$, $\lim\limits _{n \rightarrow
\infty}\left\|u_n\right\|_{H^{1/2}}=0$.
\end{Def}

Our first result is:
\begin{Thm}\label{Thm1.1}
    Suppose that $(f_1)$-$(f_5)$ hold. Then, for any fixed $a>0 $ small enough, there is a solution $\left(\omega_a, u_a\right)$ of equation \eqref{eq1.1} such that
    \begin{equation*}
        \int_{\mathbb{R}^3}|u_a|^2dx=a^2.
        \end{equation*}
Moreover,
    $\omega_a$ is a bifurcation point of equation \eqref{eq1.1}.
\end{Thm}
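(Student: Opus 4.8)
Since $\omega$ enters \eqref{eq1.1} only as a Lagrange multiplier, the plan is to decouple its two lines: for each $\omega$ in a suitable left neighbourhood $(m-\delta_0,m)$ of $m$ I first produce a nontrivial solution $u_\omega$ of the \emph{free} Dirac equation $(H_0-\omega)u=f(x,|u|)u$, where $H_0:=-i\alpha\cdot\nabla+m\beta$, and then choose $\omega=\omega_a$ so that $\|u_\omega\|_{L^2}=a$. I work in $E:=H^{1/2}(\mathbb{R}^3,\mathbb{C}^4)$ with $\|u\|_E^2:=\langle|H_0|u,u\rangle_{L^2}$, $|H_0|=(m^2-\Delta)^{1/2}$, a norm equivalent to the usual one. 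As $\sigma(H_0)=(-\infty,-m]\cup[m,+\infty)$, the spectral projections of $H_0$ split $E=E^+\oplus E^-$ so that, for $\omega\in(0,m)$, $Q_\omega(u):=\langle(H_0-\omega)u,u\rangle_{L^2}$ satisfies $Q_\omega\ge\mu(\omega)\|\cdot\|_E^2$ on $E^+$ and $Q_\omega\le-\|\cdot\|_E^2$ on $E^-$, with $\mu(\omega):=1-\omega/m\to0^+$ as $\omega\to m^-$. The (strongly indefinite) energy $\Phi_\omega(u):=\tfrac12Q_\omega(u)-\int_{\mathbb{R}^3}F(x,|u|)\,dx$ is of class $C^2(E)$ by $(f_1)$, \eqref{eq1.5} and the strict subcriticality $2<p\le q<3$ (through $E\hookrightarrow L^r(\mathbb{R}^3,\mathbb{C}^4)$, $2\le r\le3$), and its critical points are exactly the solutions of the free equation.

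\emph{Step 1 (reduction and a free solution).} By Remark~\ref{Rek1.1}(iii), $u\mapsto\int F(x,|u|)$ is convex, so for fixed $w\in E^+$ the map $v\mapsto\Phi_\omega(w+v)$ on $E^-$ is strictly concave and anticoercive, hence has a unique maximiser $v=h_\omega(w)$; by the implicit function theorem ($\Phi_\omega\in C^2$), $h_\omega:E^+\to E^-$ is $C^1$ with $h_\omega(0)=0$. Put $J_\omega(w):=\Phi_\omega(w+h_\omega(w))$. As in the usual reduction for strongly indefinite functionals (cf.\ \cite{Bartsch-Ding2006JDE,DingDirac}), $J_\omega\in C^1(E^+)$ and $w$ is critical for $J_\omega$ iff $u_\omega:=w+h_\omega(w)$ solves the free equation, which removes the indefiniteness. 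From $J_\omega(w)\ge\Phi_\omega(w)$, \eqref{eq1.5} and Sobolev one gets $J_\omega(w)\gtrsim\mu(\omega)\|w\|_E^2$ for $\|w\|_E\le\rho(\omega)$, $\rho(\omega)\sim\mu(\omega)^{1/(p-2)}\to0$, so $0$ is a strict local minimum and $J_\omega>0$ on $\{\|w\|_E=\rho(\omega)\}$. The superquadraticity \eqref{eq1.3}, the bound $\|h_\omega(te)\|_E=O(t^{q-1})$ from the Euler--Lagrange equation for $h_\omega$, and the test--function estimates provided by $(f_5)$ (using the cone $S$, $F(x,t)\ge L|x|^{-\tau}t^\alpha$, and the subcritical ranges $\alpha\in(0,8/3)$, $\tau\in(0,(8-3\alpha)/2)$, with $8/3$ the $L^2$--critical exponent of $H^{1/2}(\mathbb{R}^3)$) furnish $e\in E^+$ with $J_\omega(te)\to-\infty$ as $t\to+\infty$. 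Thus $J_\omega$ has mountain--pass geometry with critical level $c(\omega)\gtrsim\mu(\omega)\rho(\omega)^2>0$. The decay $(f_4)$ and the compactness $E\hookrightarrow L^r_{\loc}$ make $w\mapsto\int F(x,|w+h_\omega(w)|)$ weakly sequentially continuous and exclude vanishing and dichotomy of bounded Palais--Smale sequences (the limit problem $(H_0-\omega)u=0$ has no nontrivial solution, $\omega$ being in the resolvent set), so the Palais--Smale condition holds; $c(\omega)$ is attained at some $w_\omega\ne0$, which by the monotonicity in $(f_3)$ may be taken on the associated Nehari--Pankov set, giving a ground state $u_\omega:=w_\omega+h_\omega(w_\omega)\ne0$ of the free equation with $\Phi_\omega(u_\omega)=c(\omega)$.

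\emph{Step 2 (fixing the mass; bifurcation).} It remains to force $\|u_\omega\|_{L^2}^2=a^2$. From $\partial_\omega\Phi_\omega(u)=-\tfrac12\|u\|_{L^2}^2$ and the continuity (from the variational characterisation) and monotonicity (since $Q_\omega$ decreases in $\omega$) of $\omega\mapsto c(\omega)$, $c$ is differentiable a.e., and there $\|u_\omega\|_{L^2}^2=-2\,c'(\omega)$ for a ground state at level $c(\omega)$. Near the edge $H_0-\omega$ behaves on $E^+$ like $-\tfrac{1}{2m}\Delta+(m-\omega)$, so $u_\omega$ is of the scaling type of mass--subcritical NLS (ensured by $p\le q<3$ together with $(f_5)$) and therefore concentrates and shrinks; with $(f_5)$ forcing $c(\omega)>0$ to be attained and controlling its rate, one obtains $c(\omega)\to0$, $\|u_\omega\|_E\to0$ and $\|u_\omega\|_{L^2}\to0$ as $\omega\to m^-$, while $\|u_\omega\|_{L^2}$ stays bounded away from $0$ as $\omega$ recedes from $m$. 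Hence $\omega\mapsto\|u_\omega\|_{L^2}$ attains, on $(m-\delta_0,m)$, every sufficiently small positive value; picking $\omega_a$ with $\|u_{\omega_a}\|_{L^2}=a$ (a routine limiting argument handling the a.e.\ restriction) yields $(\omega_a,u_a):=(\omega_a,u_{\omega_a})$ solving \eqref{eq1.1} with $\int_{\mathbb{R}^3}|u_a|^2\,dx=a^2$. Letting $a\to0^+$ then forces $\omega_a<m$, $\omega_a\to m^-$ and $\|u_a\|_{H^{1/2}}\to0$, so $\omega_0=m$ is a bifurcation point on the left of \eqref{eq1.1}, realised by the family $\{(\omega_a,u_a)\}$ as $a\downarrow0$ --- the bifurcation assertion of the theorem.

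\emph{Main obstacle.} The strong indefiniteness of $\Phi_\omega$ is the first hurdle, removed by the Lyapunov--Schmidt reduction of Step 1. The genuinely delicate part is Step 2: one bifurcates from the \emph{edge of the essential spectrum} $m$, not from an isolated eigenvalue, so classical bifurcation theory does not apply and one must control how $\|u_\omega\|_{L^2}$ depends on $\omega$ as $\omega\uparrow m$. This needs the continuity and a.e.\ differentiability of $c(\omega)$, the identity $\|u_\omega\|_{L^2}^2=-2c'(\omega)$, and, crucially, the subcritical lower bound $(f_5)$ with its precise exponent restrictions, which make $c(\omega)\to0$ at a rate forcing $\|u_\omega\|_{L^2}$ to sweep a whole interval $(0,a_0)$ of admissible masses. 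Establishing the Palais--Smale condition on all of $\mathbb{R}^3$ for the reduced functional from the decay $(f_4)$ alone is the remaining technical point.
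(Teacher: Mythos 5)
Your strategy is genuinely different from the paper's: you fix the multiplier $\omega$ first, solve the free equation $(H_0-\omega)u=f(x,|u|)u$ by an Ackermann-type reduction plus mountain pass, and then try to tune $\omega$ so that the ground state has the prescribed mass, whereas the paper works directly on the constraint, maximizing $I$ over the set $M(v)$, reducing to a functional $J$ on $E^+\cap S_a$, proving $c_a<ma^2/2$ and running Ekeland plus a Palais--Smale argument at the level $c_a$. Step 1 of your plan (reduction, mountain-pass geometry, compactness from $(f_4)$) is plausible and close in spirit to known results for fixed-frequency Dirac equations. The genuine gap is Step 2, which is exactly the point your ``dual'' approach must carry and which you only sketch. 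The identity $\|u_\omega\|_{L^2}^2=-2c'(\omega)$ holds only at points where $c$ is differentiable and only for \emph{some} ground state at that level; since ground states need not be unique and $\omega\mapsto u_\omega$ need not be continuous, the function $\omega\mapsto\|u_\omega\|_{L^2}$ has no intermediate-value property, and the range of $c'$ over the a.e.\ set of differentiability of a merely monotone, continuous $c$ need not contain an interval. Hence the conclusion that every sufficiently small mass $a$ is attained by some $\omega_a$ does not follow from what you have written; the ``routine limiting argument handling the a.e.\ restriction'' would require precisely the missing ingredients (compactness and continuity of the ground-state branch as $\omega$ varies, or convexity/strict monotonicity of $c$), none of which you establish.

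A second, related gap: the asymptotics you invoke near the spectral edge are asserted rather than proved. That $c(\omega)\to 0$ as $\omega\to m^-$ can be extracted from $(f_5)$-type test functions, but the claims that $\|u_\omega\|_{E}\to0$ and $\|u_\omega\|_{L^2}\to0$ as $\omega\to m^-$, and that $\|u_\omega\|_{L^2}$ stays bounded away from $0$ for $\omega$ away from $m$, do not follow from $c(\omega)\to0$ alone: on $E^+$ the quadratic form $Q_\omega$ degenerates at the edge (its coercivity constant is $\mu(\omega)\to0$), so small energy does not control $\|u_\omega\|_{L^2}$, and bifurcation from the edge of the essential spectrum is exactly the delicate point (as you acknowledge) for which no argument is supplied. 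By contrast, the paper avoids both issues by prescribing the mass from the start: the constraint set $X_a$, the strict inequality $c_a<ma^2/2$ (Lemma 2.3, built from the almost-periodic test functions of Lemma 2.2), and the Palais--Smale analysis of Lemma 3.1 directly produce $u_a$ with $\|u_a\|_{L^2}=a$ together with the multiplier bound $m-Ca^{p-2}\le\omega_a<m$, from which the bifurcation statement is immediate. To make your route work you would need either uniqueness/continuity of the ground-state branch $\omega\mapsto u_\omega$, or a monotonicity/convexity argument for $c(\omega)$ strong enough to invert $a\mapsto\omega_a$; as written, the mass-prescription step is a gap, not a routine detail.
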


Note that, Theorem \ref{Thm1.1} is the first result that addresses the
existence of normalized solutions for the Dirac equation \eqref{eq1.1}
with more general non-compact nonlinearities. Theorem \ref{Thm1.1}
can be seen as a generalization of the result obtained in
\cite{DingDirac}. Let us point out that the approach developed in
 \cite{DingDirac} is not applicable to obtain multiplicity results of normalized solutions. For this reason, we propose a
novel approach consisting in considering a new suitable constraint
set and exploiting some idea coming from
\cite{Buffoni1993,jeanjean1992}. Note that this new approach can be
applied to many problems that study the existence of normalized
solutions for nonlinear equations with indefinite associated
functionals.

\begin{Thm}\label{Thm1.2}
Suppose that $(f_1)$-$(f_5)$ hold. Then, for any fixed  $a>0$ small
enough, there is a sequence $\{(\omega_a^i,u_a^i)\}_{i=1}^\infty$ of
distinct solutions of equation \eqref{eq1.1} such that
    \begin{equation*}
    \int_{\mathbb{R}^3}|u_a^i|^2dx=a^2.
\end{equation*}
Moreover,
    $\omega_a^1, \omega_a^2,\cdots $  are bifurcation points on the left of equation \eqref{eq1.1}.
\end{Thm}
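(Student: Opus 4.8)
The plan is to upgrade the single-solution construction behind Theorem \ref{Thm1.1} to a multiplicity statement by running a Ljusternik--Schnirelmann argument on the reduced functional produced by the Lyapunov--Schmidt scheme. First I would recall the perturbative framework already in place: for small $a>0$ one works on the spectral subspace where the Dirac operator $D_m=-i\alpha\cdot\nabla+m\beta$ is, after a rescaling $u=\lambda v$ with $\|v\|_{L^2}=1$ and $\lambda\to 0$, dominated by its linear part, so that the nonlinear term $F(x,|u|)$ (which by \eqref{eq1.4}--\eqref{eq1.5} is subcritical and superquadratic) becomes a genuinely small perturbation. The strongly indefinite energy functional is split along the positive/negative spectral decomposition $H^{1/2}=H^+\oplus H^-$, and one solves the equation in the ``bad'' $H^-$ direction as a function of the $H^+$ component by a contraction/implicit-function argument; this yields a reduced functional $\Phi_a$ on (a ball in) $H^+$ whose critical points correspond to solutions of \eqref{eq1.1} with the prescribed $L^2$ norm $a$, the Lagrange multiplier $\omega_a$ emerging from the constraint $\int|u|^2=a^2$. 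All of this machinery is exactly what underlies Theorem \ref{Thm1.1}; the new point is to keep track of the \emph{full} set of critical points rather than just one.

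Next I would set up the constraint manifold. Following the idea from \cite{Buffoni1993,jeanjean1992} invoked right before the statement, instead of minimizing on the sphere $\{\|u\|_{L^2}=a\}$ (which is hopeless for indefinite functionals), one introduces a new constraint set $\mathcal{M}_a$ built from the Nehari-/fibering-type relations forced by $(f_3)$ and \eqref{eq1.2}: on $\mathcal{M}_a$ the reduced functional is bounded below, coercive in the relevant variable, and its sublevel sets are, up to the $\mathbb{Z}_2$ (or $S^1$, via the $e^{i\theta}$ gauge) symmetry $u\mapsto -u$, of finite genus. The genus/category of the sphere in the infinite-dimensional $H^+$ being infinite, the Ljusternik--Schnirelmann minimax values
\begin{equation*}
c_a^i=\inf_{\gamma(A)\geq i}\ \sup_{u\in A}\Phi_a(u),\qquad i=1,2,\dots,
\end{equation*}
are well defined, finite, and nondecreasing in $i$. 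One then verifies the Palais--Smale condition for $\Phi_a$ on $\mathcal{M}_a$ at each level $c_a^i$ — this is where the decay hypothesis $(f_4)$ and the growth control in Remark \ref{Rek1.1} do the work of restoring compactness despite the problem being set on all of $\mathbb{R}^3$ — and applies the symmetric LS theorem to conclude that each $c_a^i$ is a critical value, with distinct values (or, when values coincide, infinitely many critical points by the genus estimate) giving a sequence $\{(\omega_a^i,u_a^i)\}$ of geometrically distinct solutions, each with $\int_{\mathbb{R}^3}|u_a^i|^2=a^2$. Distinctness of the $u_a^i$ for distinct $i$ follows by comparing the critical levels, which are ordered by the LS construction; a standard argument shows $c_a^i\to\infty$, so infinitely many of them are genuinely different.

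Finally, the bifurcation assertion: for each fixed $i$ one lets $a\to 0^+$. The rescaling $u_a^i=\lambda_a^i v_a^i$ with $\|v_a^i\|_{L^2}=1$ forces $\lambda_a^i\to 0$, hence $\|u_a^i\|_{H^{1/2}}\to 0$, while the reduced equation shows $\omega_a^i$ converges to a value $\omega_0^i$ in the spectrum of the linear operator, and the sign analysis coming from $(f_2)$ (positivity of $f$, so the nonlinear term pushes $\omega$ strictly to one side) guarantees $\omega_a^i<\omega_0^i$; this is precisely the definition of a bifurcation point on the left. The main obstacle I expect is not the LS machinery per se but showing that the new constraint set $\mathcal{M}_a$ is a $C^1$ manifold on which $\Phi_a$ is bounded below and satisfies $(PS)_{c_a^i}$ \emph{uniformly enough} that the construction survives the Lyapunov--Schmidt reduction — i.e. that the reduction map (itself only defined on a small ball and only for small $a$) is compatible with, and commutes with, the $\mathbb{Z}_2$-symmetry and does not destroy the genus lower bounds. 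Handling the strong indefiniteness together with the lack of compactness at infinity, simultaneously and with symmetry preserved, is the crux; everything else is a careful but routine assembly of the estimates in Remark \ref{Rek1.1}.
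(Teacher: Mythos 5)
Your overall skeleton (reduce to a functional on the positive spectral component, then run a symmetric Ljusternik--Schnirelmann/genus minimax) matches the paper's strategy, but there is a genuine gap at the heart of your argument: you never ensure that the minimax levels lie strictly below the compactness threshold. In this problem the Palais--Smale condition for the reduced functional $J$ on $X_a$ is only available at levels $c<\frac{ma^2}{2}$ (Lemma \ref{Lem3.1}); it is not true that $(f_4)$ ``restores compactness'' at arbitrary levels, and your claim that the levels $c_a^i\to\infty$ is actually incompatible with the structure of the problem --- in the paper \emph{all} the values $b_k=\inf_{\gamma(A)\ge k}\sup_A J$ must be shown to satisfy $b_k<\frac{ma^2}{2}$, and this is precisely the hard part. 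The paper achieves it through Lemmas \ref{Lem4.2}--\ref{Lem4.5}: using $(f_5)$, the periodic solutions $\Phi_m$ of the linear Dirac equation (Lemma \ref{spe}), Hermite functions, and mean-value estimates for almost periodic functions (Heinz--K\"upper--Stuart), one builds, for each $k$, $k$-dimensional subspaces $X_n^k\subset E^+$ on which $\bigl(\sup_{S_n^k}(\|v\|^2-m)\bigr)/\inf_{S_n^k}\Psi \to 0$, whence spheres of genus $k$ with $\sup J<\frac{ma^2}{2}$. Your proposal omits $(f_5)$ and this construction entirely, so the LS machinery you invoke has no levels at which compactness is guaranteed, and the argument collapses. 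Relatedly, your distinctness argument (``$c_a^i\to\infty$, hence infinitely many are different'') cannot work here; in the paper all critical values are confined below $\frac{ma^2}{2}$ and the infinitude of solutions comes from the genus statement in the LS theorem of \cite{jeanjean1992} (Theorem A: if levels coincide the critical set has genus $\ge r$, and if $b_k<\frac{ma^2}{2}$ for all $k$ one still gets infinitely many critical points).

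Two smaller discrepancies: the constraint set is not a Nehari/fibering manifold --- it is the truncated $L^2$-sphere $X_a\subset E^+$ inherited from Theorem \ref{Thm1.1} (Remark \ref{X}), and the reduction is performed by maximizing the strictly concave map $w\mapsto I(h(v,w))$ over $\mathcal{B}(v)$ rather than by a contraction argument (these are close in spirit, so this alone would not be fatal). For the bifurcation claim, the paper does not send each $\omega_a^i$ to a generic spectral value: the two-sided estimate $m-Ca^{p-2}\le \omega_a^i\le \frac{2b_i}{a^2}<m$ pins the multipliers just below $m$ and forces $\omega_a^i\to m$ and $\|u_a^i\|_{H^{1/2}}\to 0$ as $a\to 0^+$, which is the precise sense in which one gets bifurcation on the left; your sketch gestures at this but does not supply the lower bound on $\kappa(G(v))$ that makes it work.
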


The existence of multiple normalized solutions for nonlinear
Schr\"odinger equations has been extensively investigated. Bartsch
and  Valeriola in \cite{bartsh1} derived infinitely many radial
solutions for a Schr\"odinger equations from a fountain theorem type
arguments.
 Ikoma and Tanaka \cite{Ikoma2} provided a
multiplicity result for Schr\"odinger equations and systems by
exploiting an idea related to symmetric mountain pass theorems, see
also \cite{jeanjeanlu,alveszamp}. 
Let us point out that
the critical point theorems used to prove the existence of multiple
normalized solutions in the above references are proposed for
positive defined functionals. Therefore the methods seem difficult to be
applied to Dirac equations since the spectrum of Dirac operators
consists of positive and negative continuous spectrum. To establish
Theorem \ref{Thm1.2}, we are going to  use the critical point
theorem \cite[Theorem 6.2]{jeanjean1992} in combination with the new
techniques developed in the proof of Theorem \ref{Thm1.1}. We would like to mention that, the bifurcation result obtained in Theorems \ref{Thm1.1} and \ref{Thm1.2} are new in the
topic of Dirac problems.\medskip

This paper is organized as follows. In Section $2$, we recall and
present some preliminary notions on the Dirac operator and give some
basic results that will be used to prove our main results. Sections
$3$ and $4$ are dedicated to the proof of Theorems \ref{Thm1.1} and
\ref{Thm1.2}.
\par
\vspace{3mm} {\bf Notation.~}Throughout this paper, we make use of
the following notations.
\begin{itemize}
    \item[$\bullet$]  $\mathbb{R}^+$ denotes the interval $[0,\infty)$.
     \item[$\bullet$] $\|\cdot\|_{L^q}$ denotes the usual norm of the space $L^q(\mathbb{R}^3,\mathbb{C}^4),1\leq q\leq\infty$;
    \item[$\bullet$]  $o_n(1)$ denotes $o_n(1)\rightarrow 0$ as $n\rightarrow\infty$;
    \item[$\bullet$]   $u\cdot v$ denotes the scalar product in $\mathbb{C}^4$ of $u$ and $v$, i.e., $u\cdot v=\sum\limits_{i=1}^4u_i\bar{v}_i$;
    \item[$\bullet$]  $C$ or $C_i(i=1,2,\cdots)$ are some positive constants may change from line to line.
\end{itemize}

\section{The functional-analytic setting}
In this section, we present the functional setting of problem
\eqref{eq1.1}.
Firstly, we give the spectral decomposition of the following Dirac operator
\begin{equation*}
    H_0:=-i \alpha\cdot \nabla+m\beta.
\end{equation*}
It is well-known that $H_0$ is a first
order, self-adjoint operator on $L^2(\mathbb{R}^3,\mathbb{C}^4)$ with
domain $\mathscr{D}(H_0) = H^1(\mathbb{R}^3,\mathbb{C}^4)$, then by
\cite[Lemma 3.3(b)]{Bartsch-Ding2006JDE}, we know that
\begin{equation*}
    \sigma(H_0)=\sigma_c(H_0)=\mathbb{R}\setminus (-m,m),
\end{equation*}
where $\sigma (H_0)$ and $\sigma_c(H_0)$ denote the spectrum and
continuous spectrum of operator $H_0$. Thus the space
$L^2(\mathbb{R}^3,\mathbb{C}^4)$ possesses the orthogonal
decomposition:
\begin{equation*}
    L^2=L^-\oplus L^+,~~~u=u^-+u^+,
\end{equation*}
so that $H_0$ is negative definite on $L^-$ and positive definite on
$L^+$. Let $E:= \mathscr{D}(|H_0|^{1/2})$ be the Hilbert
space with the inner product
\begin{equation*}
    (u,v)=\Re(|H_0|^{1/2}u,|H_0|^{1/2}v)_{L^2}
\end{equation*}
and the induced norm $\|u\|=(u,u)^{1/2}$, where
$|H_0|^{1/2}$ denote the square root  of the absolute value
of $H_0$ and $\Re$ stands for the real part of a complex number.
Since $\sigma(H_0)=\mathbb{R}\setminus(-m,m)$, one has
\begin{equation}\label{eq2.1}
m\|u\|_{L^2}^2\leq \|u\|^2,~\text{for~all}~u\in E.
\end{equation}
Note that the linear space $E$ corresponds to the fractional
Sobolev space $H^{1/2}(\mathbb{R}^3,\mathbb{C}^4)$ and the
norm of $E$ is equivalent to the usual norm of
$H^{1/2}(\mathbb{R}^3,\mathbb{R}^4)$ defined as follows:
\begin{equation*}
    \|u\|_{H^{1/2}}=\left(\int_{\mathbb{R}^3}\sqrt{1+|\xi|^2}|\hat{u}(\xi)|^2d\xi\right)^{1/2},
\end{equation*}
where $\hat{u}$ denotes the Fourier transform of $u$, which is
defined by
\begin{equation*}
    \hat{u}(\xi)=\frac{1}{(2\pi)^{3/2}}\int_{\mathbb{R}^3}e^{-ix\xi}u(x)dx.
\end{equation*}
 Hence, $E$ embeds continuously
into $L^q(\mathbb{R}^3,\mathbb{C}^4)$ for all $q\in [2,3]$, and
compactly into $L_{loc}^q(\mathbb{R}^3,\mathbb{C}^4)$ for all $q\in
[1,3)$. Moreover, $E$ can be decomposed as follows:
\begin{equation*}
    E=E^-\oplus E^+,~~~\text{where}~E^\pm=E\cap L^\pm
\end{equation*}
which is an orthogonal decomposition with respect to inner products
$(\cdot,\cdot)_{L^2}$ and $(\cdot,\cdot)$.\\
 Consider the following functional
 \begin{equation}\label{I}
 I(u)=\frac{1}{2}\|u^+\|^{2}-\frac{1}{2}\|u^-\|^{2}-\Psi(u),
\end{equation}
for $u=u^{+}+u^{-}\in E$, where $\Psi(u)=\displaystyle
 \int_{\mathbb{R}^{3}}F(x,u)dx$. Under assumptions $(f_1)$ and $(f_3)$, it follows by standard arguments that $I$ is well-defined
 and $\mathcal{C}^{2}$ on $E$. We say that $(\omega,u)\in \mathbb{R}\times E$ is a pair of (weak) normalized solution of problem \eqref{eq1.1} if for a fixed $a>0$ and any $\varphi\in \mathcal{C}_c^\infty (\mathbb{R}^3,\mathbb{C}^4)$,
 \[I'(u)\varphi - \omega\Re(u,\varphi)_{L^2}=0,\quad \|u\|_{L^2}=a,\]
 where $a>0$ is a prescribed constant.

\section{Existence of normalized solutions}
In this section, we discuss  the main steps of the proof and
self-contained approach that will enable us to prove Theorem
\ref{Thm1.1}. There are four major steps:\\

\underline{Step $1$}: Consider the following problem:
\begin{equation}\label{ca}
c_a=\inf_{v\in V\cap S_a}\sup_{u\in M(v)}I(u),
\end{equation}
where $a>0$, $v\in E^+$,  $S_a=\{u\in E\mid  \ \displaystyle
\int_{\mathbb{R}^{3}}|u|^{2}dx=a^2\}$, while $V$ and $M(v)$ will be
defined below.
\medskip

\underline{Step $2$}:  Motivated by Lyapunov-Schmidt reduction
method \cite{lyp}, we introduce a new functional $J$ defined on
$E^{+}$ by
$$c_a=\inf_{v\in V\cap S_a}J(v).$$

\underline{Step $3$}: Exploit the perturbation method on $J$ to
define a new operator $H$ defined as in \eqref{HH} that will ensure the existence
of the desired parameter $\omega_a$.\medskip

\underline{Step $4$}: Use the Ekeland's variational principle
\cite{ekeland} to conclude the proof of our result.\\

 We consider the following set
\begin{equation*}
    V:=\{v\in E^+\mid\|v\|<\sqrt{m+1}\|v\|_{L^2}\}.
\end{equation*}
It is clear that $V$ is an open subset of $E^+$ and the norms $\|\cdot\|$ and
$\|\cdot\|_{L^2}$ are equivalent on $V$.
For $v\in E^+$ with $v\neq0$, define
\begin{equation*}
M(v):=\left\{u\in E\mid \ \ \|u\|_{L^2}=\|v\|_{L^2}, \
u^+=\frac{\|u^+\|_{L^2}}{\|v\|_{L^2}}v\ \ \mbox{and} \ \ \|u^-\|\leq
\frac{\sqrt{m}\|v\|_{L^2}}{2} \right\}.
\end{equation*}

\begin{figure}[H]
\centering
\caption{Approximate image  of $M(v)$}
\includegraphics[scale=0.65]{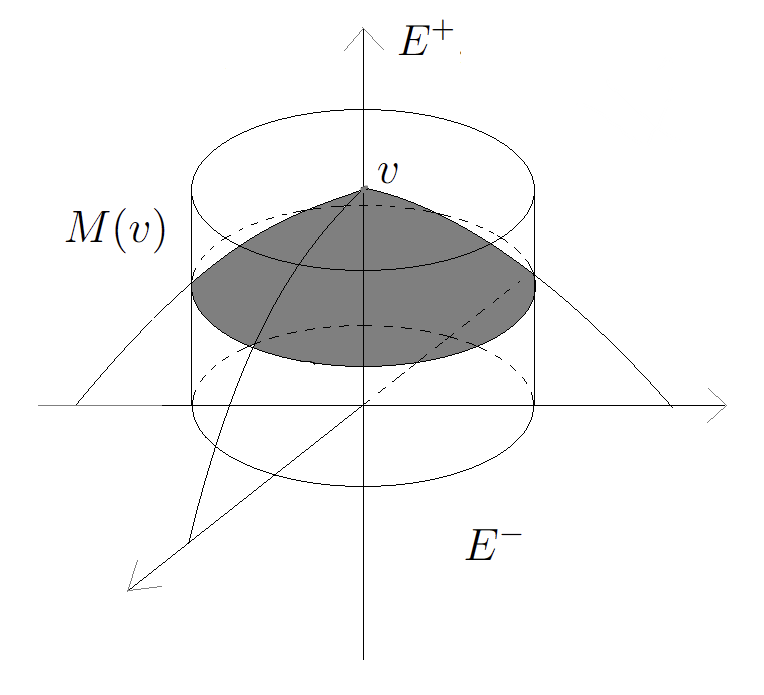}
\label{fig:1}
\end{figure}
 Next, we will demonstrate
that the supremum is indeed attained. Our goal is to provide a clear
and concise explanation and explore the properties and applications of $M(v)$.
 This step is essential for further discussion of the minimax value.
\begin{Lem}\label{Lem2.1}
Suppose that the assumptions of Theorem \ref{Thm1.1} are fulfilled.
Then, for any $v\in V$ with $\|v\|_{L^2}$ small enough, there exists
a unique $G(v)\in M(v)$ such that
\begin{equation*}
I(G(v))=\max_{u\in M(v)}I(u).
\end{equation*}
Moreover, $G$ is a continuously differentiable function.
\end{Lem}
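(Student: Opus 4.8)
The plan is to show that the functional $u \mapsto I(u)$ restricted to the set $M(v)$ attains its maximum at a unique point, and then to invoke the implicit function theorem to obtain the regularity of $G$. First I would analyze the structure of $M(v)$: every $u \in M(v)$ is of the form $u = u^+ + u^-$ where $u^+ = \frac{\|u^+\|_{L^2}}{\|v\|_{L^2}} v$ is a nonnegative multiple of the fixed vector $v$ (so $u^+$ is determined by a single scalar parameter $s = \|u^+\|_{L^2} \in [0, \|v\|_{L^2}]$), while $u^-$ ranges over the set $\{w \in E^- : \|w\|_{L^2}^2 = \|v\|_{L^2}^2 - s^2, \ \|w\| \le \tfrac{\sqrt m}{2}\|v\|_{L^2}\}$. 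On this set, writing $I(u) = \tfrac12\|u^+\|^2 - \tfrac12\|u^-\|^2 - \Psi(u)$, the term $-\tfrac12\|u^-\|^2$ is strictly concave in $u^-$ (it is a negative definite quadratic form on $E^-$), and, by Remark~\ref{Rek1.1}(iii), $\Psi$ is convex, so $-\Psi$ is concave; hence $I$ is strictly concave on $M(v)$ as a function of $u^-$ for each fixed admissible $s$. The dependence on $s$ must be handled together with the constraint coupling $s$ and $\|u^-\|_{L^2}$.

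Next I would establish that the supremum is attained. Using \eqref{eq2.1} one has $\|u^-\|_{L^2}^2 \le \tfrac1m \|u^-\|^2$, and on $M(v)$ the full $L^2$-norm and the $E$-norm are controlled by $\|v\|_{L^2}$; combined with the growth bound \eqref{eq1.5} on $F$ and the Sobolev embedding $E \hookrightarrow L^p \cap L^q$, the functional $I$ is bounded above on $M(v)$, and any maximizing sequence is bounded in $E$. Since $M(v)$ is (after the change of variables) a closed bounded convex subset of the finite-codimension-free Hilbert space $\mathbb{R} v \oplus E^-$, I would extract a weakly convergent subsequence; the quadratic positive part $\tfrac12\|u^+\|^2$ is continuous in the scalar $s$, the quadratic negative part $-\tfrac12\|u^-\|^2$ is weakly upper semicontinuous, and $-\Psi$ is weakly upper semicontinuous by Fatou together with the compact local embedding (here the smallness of $\|v\|_{L^2}$, which forces $u$ to be small in $E$, lets us run the standard splitting/Brezis–Lieb argument, or more simply use that $\Psi$ is weakly sequentially continuous on bounded sets small in norm). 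Hence the maximum is attained at some $G(v) \in M(v)$, and strict concavity in the $u^-$-variable plus strict concavity/monotonicity in $s$ gives uniqueness.

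For uniqueness more carefully, I would argue that the map from $s$ to the optimal value is strictly concave: increasing $s$ increases $\tfrac12 s^2 \|v\|^2/\|v\|_{L^2}^2$ but the sign and the convexity of $F$ (which is genuinely superquadratic near $0$ by $(f_3)$ with $p>2$) combine so that the unique critical point is non-degenerate; the key point is that $I$ restricted to $M(v)$, parametrized smoothly by $(s, u^-)$ over a convex domain, is strictly concave, so it has at most one maximizer. Finally, for the $\mathcal{C}^1$ regularity of $G$, I would set up the Euler–Lagrange system for the constrained maximization: $G(v)$ is characterized by $I'(G(v))\varphi = \mu \Re(G(v),\varphi)_{L^2}$ for test functions $\varphi$ tangent to the $u^-$-sphere and an analogous scalar equation in $s$, with a Lagrange multiplier $\mu = \mu(v)$. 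Because $I$ is $\mathcal{C}^2$ on $E$ and the maximizer is a non-degenerate critical point of a strictly concave $\mathcal{C}^2$ function on a smooth constraint manifold (the Hessian in the constrained directions is negative definite, hence invertible), the implicit function theorem applies to the map $(v, u, \mu) \mapsto$ (gradient equations, constraints) and yields that $v \mapsto (G(v), \mu(v))$ is continuously differentiable on the set of $v \in V$ with $\|v\|_{L^2}$ small.

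The main obstacle I anticipate is the attainment/compactness step: because the problem is posed on all of $\mathbb{R}^3$ and the nonlinearity is only locally compact, one must genuinely use the smallness of $\|v\|_{L^2}$ (equivalently the smallness of $a$) to rule out loss of mass at infinity in the maximizing sequence — this is exactly where assumption $(f_4)$, $\operatorname*{ess\,sup}_{|x|\ge R} r(x) \to 0$, enters, ensuring the defect at infinity is negligible so that weak upper semicontinuity of $-\Psi$ holds along the maximizing sequence. A secondary technical point is verifying the non-degeneracy of the Hessian uniformly enough to apply the implicit function theorem; this again follows from strict concavity but needs the estimates \eqref{eq1.2}--\eqref{eq1.3} to control $f'(x,t)t$ from above and below.
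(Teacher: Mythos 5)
Your outline (parametrize $M(v)$, show the maximum is attained, get uniqueness from concavity, then apply the implicit function theorem) matches the paper's strategy in shape, but the two steps that carry the whole proof are missing or incorrect. Your uniqueness argument rests on the assertion that $I$ restricted to $M(v)$, ``parametrized smoothly by $(s,u^-)$ over a convex domain, is strictly concave.'' This is false as stated: $s$ and $u^-$ are not independent (they are coupled by $s^2+\|u^-\|_{L^2}^2=\|v\|_{L^2}^2$, which is not a convex constraint set), and in the $s$-direction the quadratic term $\tfrac12 s^2\|v\|^2/\|v\|_{L^2}^2$ is strictly convex, not concave. The essential device in the paper is to eliminate $s$ through the $L^2$-constraint and parametrize $M(v)$ by $w=u^-$ alone via $h(v,w)=\sqrt{\|v\|_{L^2}^2-\|w\|_{L^2}^2}\,\frac{v}{\|v\|_{L^2}}+w$ on the convex ball $\mathcal{B}(v)$; only then is the quadratic part of $(I\circ h)(v,\cdot)$ concave in $w$. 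Even so, strict concavity of the full functional does not follow from convexity of $\Psi$, because $w\mapsto h(v,w)$ is nonlinear, so $-\Psi\circ h(v,\cdot)$ need not be concave; the paper proves concavity by estimating the second derivative term by term (the bounds $|I_j|\le\tfrac18\|z\|^2$ obtained from \eqref{eq1.2}, \eqref{eq1.5}, H\"older and Sobolev, crucially using the smallness of $\|v\|_{L^2}$) to reach $\frac{\partial^2(I\circ h)}{\partial w^2}(v,w)[z,z]\le-\tfrac14\|z\|^2$. You invoke \eqref{eq1.2}--\eqref{eq1.3} only for ``non-degeneracy of the Hessian,'' but the concavity itself is asserted rather than derived, and in your $(s,u^-)$ formulation it does not hold.

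Second, you never address the constraint $\|u^-\|\le\tfrac{\sqrt m}{2}\|v\|_{L^2}$ built into $M(v)$. To pass from ``maximizer'' to ``critical point of $(I\circ h)(v,\cdot)$'', which is what your implicit-function-theorem step needs, one must exclude maximizers on the boundary of $\mathcal{B}(v)$; the paper does this (Claim 2) by showing $(I\circ h)(v,0)-(I\circ h)(v,w)\ge\tfrac{m}{16}\|v\|_{L^2}^2$ whenever $\|w\|=\tfrac{\sqrt m}{2}\|v\|_{L^2}$ and $\|v\|_{L^2}$ is small. Without interiority, your Lagrange-multiplier system is incomplete (a boundary maximizer would carry an additional multiplier or inequality, and $w(v)$ would not be characterized by $\frac{\partial (I\circ h)}{\partial w}(v,w(v))=0$). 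Finally, the compactness issue you single out as the main obstacle is not an obstacle, and $(f_4)$ plays no role in this lemma: once concavity is established (or even just using $F\ge0$, Fatou and the local compact embedding), $(I\circ h)(v,\cdot)$ is weakly upper semicontinuous on the weakly compact convex set $\mathcal{B}(v)$, so attainment is automatic; in particular your claim that $\Psi$ is weakly sequentially continuous on small bounded sets is neither needed nor correct in general.
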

\begin{proof}
    Let
    \begin{equation*}
        \mathscr{D}(h)=\left\{(v,w)\in V\times E^-\mid\|w\|\leq \frac{\sqrt{m}\|v\|_{L^2}}{2}\right\}
    \end{equation*}
and define the map $h:\mathscr{D}(h)\to E$ by
\begin{equation}\label{hv}
h(v,w)=\sqrt{\|v\|_{L^2}^2-\|w\|_{L^2}^2}\frac{v}{\|v\|_{L^2}}+w.
\end{equation}
Clearly, from \eqref{eq2.1}, we have $$\displaystyle\|w\|_{L^2}\leq
\frac{1}{\sqrt{m}}\|w\|\leq \frac{\|v\|_{L^2}}{2} \ \ \mbox{if} \ \
(v,w)\in \mathscr{D}(h),$$ and so $h$ is well-defined. Given $v\in
V$, we denote
\begin{equation*}
    \mathcal{B}(v)=\left\{w\in E^-\mid\|w\|\leq \frac{\sqrt{m}\|v\|_{L^2}}{2}\right\}.
\end{equation*}
The map $h(v,\cdot): \mathcal{B}(v)\rightarrow M(v)$ is a surjection, we can deduce that
\begin{equation}\label{eqq}
\sup_{u\in M(v)} I(u)=\sup_{u\in B(v)} I\circ h(v,u).
\end{equation}

 {\bf Claim $1$:} $(I\circ h)(v,\cdot)$ is
strictly concave on $\mathcal{B}(v)$ for $\|v\|_{L^2}$ small enough.

\medskip
\noindent Indeed, for any $z\in E^-$,
 one has
\begin{equation}\label{eq2.2}
\frac{\partial h}{\partial w}(v,w)z
=z-\frac{(w,z)_{L^2}}{\|v\|_{L^2}\sqrt{\|v\|_{L^2}^2-\|w\|_{L^2}^2}}v.
\end{equation}
Therefore, a direct computation in combination with \eqref{eq2.2}
gives
\begin{equation*}
    \begin{split}
    \left\langle \frac{\partial (I\circ h)}{\partial w}(v,w),z \right\rangle
        &=    \left\langle I^\prime(h(v,w)),\frac{\partial h}{\partial w}(v,w)z\right\rangle\\
        &
        =-(w,z)-\Re\int_{\mathbb{R}^3}f(x,|h(v,w)|)h(v,w)\cdot zdx\\
        &\quad-\Re(w,z)_{L^2}\left(\frac{\|v\|^2}{\|v\|_{L^2}^2}-
        \frac{\displaystyle\Re\int_{\mathbb{R}^3}
            f(x,|h(v,w)|)h(v,w)\cdot vdx}{\|v\|_{L^2}\sqrt{\|v\|_{L^2}^2-\|w\|_{L^2}^2}}\right),
    \end{split}
\end{equation*}
and
\begin{equation}\label{num1}
\frac{\partial^2(I\circ
    h)}{\partial w^2}(v,w)[z,z]=-\|z\|^2-\frac{\|v\|^2\|z\|_{L^2}^2}{\|v\|_{L^2}^2}+\sum_{j=1}^6I_j,
\end{equation}
where
\begin{equation*}
    I_1=-\Re \int_{\mathbb{R}^3}
f\left(x,|h(v,w)|\right)\left(|z|^2-\frac{\Re(w,z)_{L^2}}{\|v\|_{L^2}\sqrt{\|v\|_{L^2}^2-\|w\|_{L^2}^2}}v\cdot z\right)dx,
\end{equation*}

\begin{equation*}
    I_2=- \int_{\mathbb{R}^3}
\frac{f^\prime(x,|h(v,w)|)}{|h(v,w)|}\big(\Re \left(h(v,w)\cdot z\right)\big)^2dx,
\end{equation*}

\begin{equation*}
    I_3=\frac{2\Re(w,z)_{L^2}}{\|v\|_{L^2}\sqrt{\|v\|_{L^2}^2-\|w\|_{L^2}^2}}\int_{\mathbb{R}^3}
\frac{f^\prime\left(x,|h(v,w)|\right)}{|h(v,w)|}\Re\left(h(v,w)\cdot z\right)\Re\left(h(v,w)\cdot v\right)
dx,
\end{equation*}

\begin{align*}
    I_4&=\frac{\Re(w,z)_{L^2}}{\|v\|_{L^2}\sqrt{\|v\|_{L^2}^2-\|w\|_{L^2}^2}}\Re \int_{\mathbb{R}^3} f\left(x,|h(v,w)|\right)\left(v\cdot z-\frac{ (w,z)_{L^2}}{\|v\|_{L^2}
        \sqrt{\|v\|_{L^2}^2-\|w\|_{L^2}^2}}|v|^2\right)dx,\nonumber
\end{align*}

\begin{equation*}
    I_5=\frac{-\Re(w,z)_{L^2}^2}{\|v\|_{L^2}^2\left(\|v\|_{L^2}^2-\|w\|_{L^2}^2\right)}\int_{\mathbb{R}^3}
    \frac{f^\prime(x,|h(v,w)|)}{|h(v,w)|}\big(\Re \left(h(v,w)\cdot v\right)\big)^2
    dx,
\end{equation*}

and
\begin{equation*}
    I_6=\frac{\Re(w,z)_{L^2}^2}{\|v\|_{L^2}\left(\|v\|_{L^2}^2-\|w\|_{L^2}^2\right)^{3/2}}\Re \int_{\mathbb{R}^3} f\left(x,|h(v,w)|\right)h(v,w)\cdot vdx.
\end{equation*}
By the definition of $h$ and the equivalent of norms on $V$, we can check that
\begin{equation}\label{eq2.3}
\|h(v,w)\|\leq C\|v\|_{L^2}.
\end{equation}
Note that, by H\"older inequality, \eqref{eq1.2}, \eqref{eq1.5}, \eqref{eq2.3} and
for $\|v\|_{L^2}$ small enough, we get
\begin{align}\label{numm2}
    |I_1|&\leq C\int_{\mathbb{R}^3}\left(|h(v,w)|^{p-2}+|h(v,w)|^{q-2}\right)\left(|z|^2+\frac{\|z\|_{L^2}}{\|v\|_{L^2}}|v||z|\right)dx\\
    &\leq \frac{C\|z\|_{L^2}}{\|v\|_{L^2}}\left(\|h(v,w)\|_{L^p}^{p-2}\|z\|_{L^p}\|v\|_{L^p}+\|h(v,w)\|_{L^q}^{q-2}\|z\|_{L^q}\|v\|_{L^q}\right)\nonumber\\
    &\quad+C\left(\|h(v,w)\|_{L^p}^{p-2}\|z\|_{L^p}^2+\|h(v,w)\|_{L^q}^{q-2}\|z\|_{L^q}^2\right)\nonumber\\
    &\leq C\left(\|h(v,w)\|^{p-2}+\|h(v,w\|^{q-2}\right)\|z\|^2\nonumber\\
    &\leq C\left(\|v\|_{L^2}^{p-2}+\|v\|_{L^2}^{q-2}\right)\|z\|^2\nonumber\\
    &\leq \frac{1}{8}\|z\|^2\nonumber.
\end{align}
A similar estimation yields
\begin{equation}\label{num3}
        |I_j|\leq \frac{1}{8}\|z\|^2,~~j=2,3,\cdots,6.
\end{equation}
Consequently, due to \eqref{num1}, \eqref{numm2} and \eqref{num3},
we infer that
\begin{equation*}
\frac{\partial^2(I\circ
    h)}{\partial w^2}(v,w)[z,z]\leq -\frac{1}{4}\|z\|^2,
\end{equation*}
which means that $(I\circ h)(v,w)$ is strictly concave in $w$.

\medskip
\textbf{Claim $2$:} $(I\circ h)(v,\cdot)$ cannot
achieve its maximum on the boundary of $\mathcal{B}(v)$.
\medskip

\noindent If $v\in V$
with $\|v\|_{L^2}$ small enough and
$\displaystyle\|w\|=\frac{\sqrt{m}\|v\|_{L^2}}{2}$, we have
    \begin{equation*}
    \begin{split}
&\quad (I\circ h)(v,0)-(I\circ h)(v,w)\\
&
=\frac{\|w\|_{L^2}^2\|v\|^2}{2\|v\|_{L^2}^2}+\frac{1}{2}\|w\|^2-\int_{\mathbb{R}^3}F(x,|v|)dx+\int_{\mathbb{R}^3}F(x,|h(v,w)|)dx\\
&\geq \frac{m}{8}\|v\|_{L^2}^2-C\|v\|_{L^2}^p\\
&\geq \frac{m}{16}\|v\|_{L^2}^2.
    \end{split}
\end{equation*}
This proves the claim.
\medskip

 As a consequence of \eqref{eqq}, Claims $1$ and $2$, we conclude that there exists a unique
$w(v)\in \mathcal{B}(v)$ such that
\begin{equation*}
    I(h(v,w(v)))=\sup_{\tilde{w}\in \mathcal{B}(v)}I(h(v,\tilde{w})).
\end{equation*}
 By letting
\begin{equation}\label{G}
    G(v)=h(v,w(v)),
\end{equation}
then $G(v)\in M(v)$ satisfies
\begin{equation*}
    I(G(v))=\sup_{u\in M(v)}I(u).
\end{equation*}
Moreover, we have $\displaystyle \frac{\partial (I\circ h)}{\partial w}(v,w(v))=0$. Then, by the implicit function
theorem, we have $w(v)$ is continuously differentiable. This ends the proof .
\end{proof}
\subsection{The perturbed and reduced functional}
In this part, we construct a new functional related to the
functional $I$ defined on $E^{+}$ and contains a perturbed part that
will lay an important role to get the parameter $\omega_a$ in
equation \eqref{eq1.1}.\medskip

 We denote,
for $u\in E$ with $u^+\neq0$
    \begin{equation*}
\kappa(u)=\frac{\|u^+\|^2-\Re\displaystyle\int_{\mathbb{R}^3}f(x,|u|)u\cdot u^+dx}{\|u^+\|_{L^2}^2}
\end{equation*}
and consider the operator $H$ given by
    \begin{equation}\label{HH}
Hu:=H_0u-f(x,|u|)u-\kappa(u)u.
\end{equation}
Then by the definition of $G$ (see \eqref{G}), for any $z\in E^-$,
there holds that
\begin{equation}\label{eq2.4}
\Re\left(H\circ G(v),z\right)_{L^2}=0.
\end{equation}

For $a>0$ small enough, we consider the following functional $J:V\to
\mathbb{R}$ defined by
\begin{equation*}
    J(v)=(I \circ G)(v).
\end{equation*}
Invoking Lemma \ref{Lem2.1}, the functional $J$ is of class $\mathcal{C}^1$.
Let
\begin{equation*}
    c_a=\inf_{v\in V\cap S_a}J(v).
\end{equation*}
Notice that, for any $v \in V \cap S_a$, we have
$$
J(v)
=\frac{a^2-\|w(v)\|_{L^2}^2}{2a^2}\|v\|^2-\frac{1}{2}\|w(v)\|^2-\Psi(G(v))
\leq \frac{1}{2}\|v\|^2
$$
and so
\begin{equation}\label{numero}
    c_a=\inf _{v \in V \cap S_a}J(v) \leq \inf _{v \in V \cap S_a} \frac{1}{2}\|v\|^2=\frac{ m a^2}{2}.
\end{equation}
Furthermore, by constructing a special sequence, we will show that
$\displaystyle c_a<m a^2/2$, which  plays an important role in proving
our main theorems. For this purpose, we need the following Lemma.
\begin{Lem}\label{Lem2.2}
Under the assumptions of Theorem \ref{Thm1.1}, there exists a
bounded sequence $\{\nu_n\}\subset V$ such that
    \begin{equation*}
        \lim_{n\to \infty}\frac{\|\nu_n\|^2-m}{\Psi(\nu_n)}=0
    \end{equation*}
    and
    \begin{equation*}
        \|\nu_n\|_{L^2}=1,~~\Psi(\nu_n)>0,~n\in \mathbb{N}.
    \end{equation*}
\end{Lem}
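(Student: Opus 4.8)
The plan is to construct the sequence $\{\nu_n\}$ explicitly by rescaling a fixed test function that is supported inside the cone $S$ from hypothesis $(f_5)$. First I would pick a nonzero $\varphi \in \mathcal{C}_c^\infty(\mathbb{R}^3,\mathbb{C}^4)$ whose support is contained in the cone $S = \{tx \mid t\ge 1,\ x\in B_d(x_0)\}$; since $S$ has nonempty interior this is possible. For $R>0$ large, set $\varphi_R(x) = R^{-3/2}\varphi(x/R)$, so that $\|\varphi_R\|_{L^2}=\|\varphi\|_{L^2}$ is independent of $R$, while the support of $\varphi_R$ drifts off to infinity inside $S$. The next step is to project: write $\varphi_R = \varphi_R^+ + \varphi_R^-$ and normalize, so that the candidate is $\nu_n := \varphi_{R_n}^+ / \|\varphi_{R_n}^+\|_{L^2}$ for a suitable sequence $R_n\to\infty$ (after checking $\varphi_{R_n}^+\neq 0$, which holds for $R$ large since $\varphi_R \to$ its $L^2$-limit behavior forces the positive part not to vanish). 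Then $\|\nu_n\|_{L^2}=1$ by construction, and $\Psi(\nu_n)>0$ follows from $(f_2)$ once we know $\nu_n\not\equiv 0$.

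The heart of the argument is the two asymptotic estimates. For the numerator $\|\nu_n\|^2 - m$: by \eqref{eq2.1} we always have $\|\nu_n\|^2\ge m$, so it suffices to show $\|\nu_n\|^2 \to m$, i.e. that the rescaled bumps become "almost" eigenfunctions at the bottom of the positive spectrum. Working on the Fourier side, $\widehat{\varphi_R}(\xi) = R^{3/2}\widehat{\varphi}(R\xi)$ concentrates near $\xi = 0$ as $R\to\infty$; since the symbol of $|H_0|$ evaluated on $L^+$ equals $\sqrt{|\xi|^2+m^2}\to m$ as $\xi\to 0$, a dominated-convergence argument gives $\|\varphi_R^+\|^2/\|\varphi_R^+\|_{L^2}^2 \to m$, hence $\|\nu_n\|^2 - m = o(1)$. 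For the denominator I would use $(f_5)$: on the support of $\varphi_{R_n}$, which lies in $S$ and where $|\nu_n|$ is of order $R_n^{-3/2}$ (hence $\le t_0$ for $n$ large), we have $F(x,|\nu_n|)\ge L|x|^{-\tau}|\nu_n|^\alpha$. Changing variables $x = R_n y$ and using $|x|^{-\tau}\sim R_n^{-\tau}|y|^{-\tau}$ together with $|\nu_n(R_n y)|^\alpha \sim R_n^{-3\alpha/2}|\varphi^+(y)|^\alpha$ and the Jacobian $R_n^3$, one gets
\begin{equation*}
\Psi(\nu_n) \ge c\, R_n^{\,3 - \tau - 3\alpha/2}
\end{equation*}
for some $c>0$; the exponent $3-\tau-3\alpha/2$ is positive precisely because $\tau < (8-3\alpha)/2 \le (6-3\alpha)/2 + 1$... — more carefully, the condition $\tau < (8-3\alpha)/2$ in $(f_5)$ is exactly what is needed to make this lower bound blow up (or at least not decay too fast relative to the numerator). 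Then $\frac{\|\nu_n\|^2-m}{\Psi(\nu_n)} \le \frac{o(1)}{c R_n^{\beta}}$ with $\beta>0$ forces the ratio to $0$, provided the numerator decays no slower than a negative power — which the Fourier estimate delivers with a quantitative rate $\|\varphi_R^+\|^2/\|\varphi_R^+\|_{L^2}^2 - m = O(R^{-2})$.

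The main obstacle I anticipate is controlling the interaction between the spectral projection $P^+$ and the rescaling: $P^+$ is a nonlocal (Fourier-multiplier) operator, so $\varphi_R^+$ is not compactly supported even though $\varphi_R$ is, and one must verify that the "tail" of $\varphi_R^+$ outside a large ball (hence outside $S$) contributes negligibly to $\Psi(\nu_n)$ — otherwise the clean lower bound from $(f_5)$ could be spoiled. I would handle this by splitting $\Psi(\nu_n)$ over the support of $\varphi_{R_n}$ (where $(f_5)$ applies directly) and its complement (which is harmless since we only need a lower bound and $F\ge 0$), so in fact $\Psi(\nu_n)\ge \int_{\mathrm{supp}\,\varphi_{R_n}\cap S}F(x,|\nu_n|)dx$ and the tail issue disappears on the lower-bound side; the only place nonlocality matters is then in the denominator's implicit lower bound $|\nu_n|\gtrsim R_n^{-3/2}|\varphi^+(y)|$ on a set of positive measure, which follows once $\varphi_R^+$ stays bounded away from $0$ in $L^2$ on a fixed rescaled region — itself a consequence of the Fourier concentration. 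Boundedness of $\{\nu_n\}$ in $E$ (hence in $V$, after checking $\|\nu_n\| < \sqrt{m+1}\|\nu_n\|_{L^2} = \sqrt{m+1}$) is immediate from $\|\nu_n\|^2\to m < m+1$.
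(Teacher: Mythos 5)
Your argument is correct, but it follows a genuinely different route from the paper's. The paper gives no direct proof of Lemma \ref{Lem2.2}: it appeals to \cite[Lemma 2.6]{DingDirac}, and the construction invoked there is of the same type as the one this paper redeploys in Section 4 through the maps $L_k$ of \eqref{eq4.1}: one multiplies the bounded periodic solution $\Phi_m$ of $H_0\Phi=m\Phi$ (Lemma \ref{spe}) by slowly varying profiles $\zeta(\cdot/k)$ and controls $\|\nu_k\|^2-m$ and $\Psi(\nu_k)$ through the mean-value estimates for almost periodic functions of \cite[Proposition 3.1]{Heinz-Kupper-Stuart1992JDE}, exactly as in Lemmas \ref{Lem4.2} and \ref{Lem4.3} (the case $k=1$ of Lemma \ref{Lem4.4} is essentially the present statement); there the numerator decays like $1/k$, the denominator is bounded below by $c\,k^{-3(\alpha-2)/2-\tau}$, and the full constraint $\tau<(8-3\alpha)/2$ of $(f_5)$ is what makes the quotient vanish. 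You instead dilate a fixed spinor supported in the cone $S$ (which is invariant under dilations by factors $\geq 1$), so that $\widehat{\varphi_R}$ concentrates at $\xi=0$, where the symbol of $H_0$ on the positive subspace equals $m$; since $\sqrt{|\xi|^2+m^2}-m=O(|\xi|^2)$, the numerator $\|\nu_n\|^2-m$ is $O(R^{-2})$ even though the projector correction $P^+(\xi/R)-P^+(0)$ is only $O(1/R)$, while $(f_5)$ yields $\Psi(\nu_n)\gtrsim R^{3-\tau-3\alpha/2}$, so the quotient is $O(R^{\tau+3\alpha/2-5})\to 0$ under the weaker condition $\tau<5-3\alpha/2$. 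What each approach buys: the paper's route reuses machinery it needs anyway for the multiplicity argument and generalizes at once to $k$-dimensional subspaces, whereas yours is self-contained, avoids the periodic generalized eigenfunction and the almost-periodicity lemma, and establishes the conclusion under a slightly weaker restriction on $\tau$.

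Two details to pin down when writing it up. First, choose $\varphi$ with $P^+(0)\varphi\neq 0$ (for instance, only the upper two components nonzero), so that $\|\varphi_R^+\|_{L^2}$ stays bounded away from $0$: this is precisely your Fourier-concentration point, since $g_R(y):=R^{3/2}\varphi_R^+(Ry)$ has Fourier transform $P^+(\xi/R)\widehat{\varphi}(\xi)$ and converges in $L^2$ to $P^+(0)\varphi$. Second, the assertion that $|\nu_n|$ is of order $R^{-3/2}$ on $\mathrm{supp}\,\varphi_R$ is delicate as a pointwise \emph{upper} bound, because $P^+$ is a nonlocal Fourier multiplier and $L^\infty$ control of $\varphi_R^+$ is not free; but, as you anticipate, only the lower bound on a positive-measure subset of $S$ is needed, and the requirement $t\leq t_0$ in $(f_5)$ can be met either by intersecting with $\{|g_R|\leq M\}$ or simply by the monotonicity of $F(x,\cdot)$, applying $(f_5)$ at the level $\delta R^{-3/2}\leq \min\{|\nu_n|,t_0\}$. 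With these touches the proof closes; membership in $V$ and boundedness follow from $\|\nu_n\|^2\to m$, as you say, and $\Psi(\nu_n)>0$ from $(f_2)$.
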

\begin{proof}
 In light of Remark \ref{Rek1.1}, with slight modification, the proof is similar to the one in \cite[Lemma 2.6]{DingDirac}.
\end{proof}
\begin{Lem}\label{Lem2.3}
 Under the assumptions of Theorem \ref{Thm1.1}, if the prescribed constant $a$ small enough, then
 there holds $$c_a<\frac{ma^2}{2}.$$
\end{Lem}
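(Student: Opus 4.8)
The plan is to exploit the scaling inequalities in \eqref{eq1.3}--\eqref{eq1.4} together with the test-sequence $\{\nu_n\}$ produced by Lemma \ref{Lem2.2}. The key observation is that $c_a \le J(v)$ for \emph{any} admissible $v\in V\cap S_a$, so it suffices to exhibit one such $v$ with $J(v) < ma^2/2$ for every sufficiently small $a$. Starting from a member $\nu_n$ of the sequence from Lemma \ref{Lem2.2} (which satisfies $\|\nu_n\|_{L^2}=1$, $\Psi(\nu_n)>0$, and $\|\nu_n\|^2 - m = o(1)\Psi(\nu_n)$), I would rescale it by setting $v = a\,\nu_n$; then $\|v\|_{L^2}=a$, so $v\in S_a$, and for $a$ small $\|v\| = a\|\nu_n\| < \sqrt{m+1}\,a = \sqrt{m+1}\|v\|_{L^2}$, so $v\in V$. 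Thus $v$ is admissible.

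Next I would estimate $J(v) = (I\circ G)(v)$ from above. From the formula displayed just before \eqref{numero},
\[
J(v) = \frac{a^2 - \|w(v)\|_{L^2}^2}{2a^2}\|v\|^2 - \frac12\|w(v)\|^2 - \Psi(G(v)) \le \frac12\|v\|^2 - \Psi(G(v)),
\]
since the first term is at most $\tfrac12\|v\|^2$ and $\tfrac12\|w(v)\|^2\ge 0$. Now $\tfrac12\|v\|^2 = \tfrac12 a^2\|\nu_n\|^2 = \tfrac{ma^2}{2} + \tfrac{a^2}{2}(\|\nu_n\|^2 - m)$. So the job reduces to showing that $\Psi(G(v))$ dominates the error term $\tfrac{a^2}{2}(\|\nu_n\|^2-m) = \tfrac{a^2}{2}o(1)\Psi(\nu_n)$. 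For this I need a lower bound on $\Psi(G(v))$ in terms of $\Psi(v) = \Psi(a\nu_n)$. Since $G(v) = h(v,w(v))$ with $w(v)\in\mathcal B(v)$ small, and since $(I\circ h)(v,w(v)) = \max$, one has $J(v)\ge (I\circ h)(v,0) = \tfrac12\|v\|^2 - \Psi(v)$ as a \emph{lower} bound, but for the \emph{upper} bound on $J$ I instead want to control $\Psi(G(v))$ from below; here I would use the strict concavity of $(I\circ h)(v,\cdot)$ to argue $w(v)\to 0$ as $\|v\|_{L^2}\to 0$ in a quantitative way (indeed $\|w(v)\|\le C\|v\|_{L^2}^{p-1}$ type bounds follow from the Euler--Lagrange equation $\partial_w(I\circ h)(v,w(v))=0$ and the estimates in Claim~1), so that $|h(v,w(v))|$ and $|v|$ are comparable in $L^p\cap L^q$ and, using \eqref{eq1.3}, $\Psi(G(v)) \ge \Psi(v) - o(\Psi(v)) \ge \tfrac12\Psi(v)$ for $a$ small. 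Combining, for a fixed large $n$ and then $a$ small,
\[
J(a\nu_n) \le \frac{ma^2}{2} + \frac{a^2}{2}\bigl(\|\nu_n\|^2 - m\bigr) - \frac12\Psi(a\nu_n) = \frac{ma^2}{2} + \frac{a^2}{2}o_n(1)\Psi(\nu_n) - \frac12\Psi(a\nu_n).
\]
By \eqref{eq1.4}, $\Psi(a\nu_n)\ge a^q\Psi(\nu_n)$ (taking $a\le 1$, since the integrand scales between $a^p$ and $a^q$ and $q>p$ gives the smaller power $a^q$ as the lower bound when $a\le 1$ on the region where $|a\nu_n|\le 1$; on the region $|a\nu_n|>1$ one gets $a^p$, which is even larger), hence $\tfrac12\Psi(a\nu_n) \ge \tfrac12 a^q \Psi(\nu_n)$. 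So
\[
J(a\nu_n) - \frac{ma^2}{2} \le \Psi(\nu_n)\Bigl(\frac{a^2}{2}o_n(1) - \frac{a^q}{2}\Bigr) = \frac{a^2\Psi(\nu_n)}{2}\bigl(o_n(1) - a^{q-2}\bigr).
\]
Since $q>2$, for each fixed small $a>0$ I first choose $n$ so large that $o_n(1) < \tfrac12 a^{q-2}$ — wait, that is circular; instead I fix the order: pick $n$ first so that $|\|\nu_n\|^2 - m| \le \tfrac{1}{10}\Psi(\nu_n)$, fix that $\nu := \nu_n$, and then choose $a$ small enough (depending on this fixed $\nu$) that the comparison $\Psi(a\nu)\ge a^q\Psi(\nu)$ holds and that $a^{q-2} > $ the now-fixed ratio; more cleanly, with $\nu$ fixed the right-hand side is $\tfrac{a^2}{2}(\varepsilon_\nu \Psi(\nu) - a^{q-2}\Psi(\nu))$ where $\varepsilon_\nu$ is a fixed constant times $a^2/a^2$... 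I would instead keep $n=n(a)$ tending to infinity slowly, but the cleanest route is: from Lemma \ref{Lem2.2}, $\|\nu_n\|^2 - m = \delta_n\Psi(\nu_n)$ with $\delta_n\to 0$; then $J(a\nu_n) - ma^2/2 \le \tfrac{a^2}{2}\delta_n\Psi(\nu_n) - \tfrac12 a^q\Psi(\nu_n) = \tfrac{a^2}{2}\Psi(\nu_n)(\delta_n - a^{q-2})$, which is negative as soon as $\delta_n < a^{q-2}$; since $\delta_n\to 0$ we may, for each fixed $a>0$, pick $n$ with $\delta_n < a^{q-2}$, and then $c_a \le J(a\nu_n) < ma^2/2$. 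This is valid for every small $a$, which is exactly the claim.

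The main obstacle is the second paragraph's lower bound $\Psi(G(v)) \ge \tfrac12\Psi(v)$: one must show that passing from $v$ to $G(v) = h(v,w(v))$ does not destroy too much of the potential energy, which requires quantitative control $\|w(v)\| = O(\|v\|_{L^2}^{p-1})$ (or at least $o(\|v\|_{L^2})$ suffices if combined carefully) obtained by testing the identity $\partial_w(I\circ h)(v,w(v))=0$ against $z=w(v)$ and using the concavity estimate $\partial_w^2(I\circ h)(v,w)[z,z]\le-\tfrac14\|z\|^2$ from Lemma \ref{Lem2.1} together with the bound \eqref{eq1.5} on $F$. Once $w(v)$ is this small, $\|h(v,w(v)) - v\|_{L^q}$ is controlled, and \eqref{eq1.3} (Lipschitz-type continuity of $F(x,\cdot)$ under the growth \eqref{eq1.2}) converts this into $|\Psi(G(v)) - \Psi(v)| = o(\Psi(v))$. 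Everything else — admissibility of $a\nu_n$, the arithmetic with the scaling exponents, and the final choice of $n$ depending on $a$ — is routine.
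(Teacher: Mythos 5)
Your overall strategy is the paper's: test with $\hat\nu_n=a\nu_n$, reduce to comparing $\tfrac{a^2}{2}\left(\|\nu_n\|^2-m\right)$ with a fixed multiple of $a^q\Psi(\nu_n)$, and invoke Lemma \ref{Lem2.2}; moreover your quantitative bound $\|w(v)\|\le C\|v\|_{L^2}^{p-1}$ (testing $\partial_w(I\circ h)(v,w(v))=0$ against $w(v)$ and integrating the concavity estimate of Lemma \ref{Lem2.1}) is correct. The genuine gap is the step $\Psi(G(v))\ge\tfrac12\Psi(v)$. The tools you cite only give an \emph{additive} error: writing $G(v)=\lambda v+w(v)$ with $\lambda=\sqrt{a^2-\|w(v)\|_{L^2}^2}/a$, the growth bounds \eqref{eq1.2} and \eqref{eq1.5} yield
$|\Psi(G(v))-\Psi(\lambda v)|\le C\|v\|^{p-1}\|w(v)\|+\cdots\le C a^{2(p-1)}$,
whereas the only available lower bound for $\Psi(a\nu_n)$ is $a^{q}\Psi(\nu_n)$ from \eqref{eq1.3}. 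Under the sole assumption $2<p\le q<3$, the inequality $2(p-1)>q$ can fail (e.g.\ $p=2.2$, $q=2.8$), and Lemma \ref{Lem2.2} provides no positive lower bound on $\Psi(\nu_n)$ (for the natural test sequence one expects $\Psi(\nu_n)\to0$); worse, in your final ordering $n$ is chosen \emph{after} $a$ and may have to be large, making $\Psi(\nu_n)$ arbitrarily small. Hence the additive error $O\!\left(a^{2(p-1)}\right)$ need not be $o\!\left(\Psi(a\nu_n)\right)$, and the claimed relative estimate $|\Psi(G(v))-\Psi(v)|=o(\Psi(v))$ is not justified; refinements such as Cauchy--Schwarz with weight $f$ only change the exponent to $3p-4$, which can still be below $q$.

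The missing idea is exactly the term you discarded in your first display: the paper keeps $-\tfrac12\|w(v)\|^2$ and never compares the $w$-contribution with $\Psi(v)$. By convexity of $\Psi$ (Remark \ref{Rek1.1}),
\begin{equation*}
\Psi(G(\hat\nu_n))\;\ge\;2\,\Psi\!\left(\frac{\sqrt{a^2-\|w(\hat\nu_n)\|_{L^2}^2}}{2a}\,\hat\nu_n\right)-\Psi\!\left(w(\hat\nu_n)\right)\;\ge\;2^{1-2q}\,\Psi(\hat\nu_n)-C\|w(\hat\nu_n)\|^p-C\|w(\hat\nu_n)\|^q,
\end{equation*}
using \eqref{eq1.3} and \eqref{eq1.5}, and since $\|w(\hat\nu_n)\|\le\tfrac{\sqrt m}{2}a$ uniformly in $n$ and $p,q>2$, one gets $-\tfrac12\|w(\hat\nu_n)\|^2+C\|w(\hat\nu_n)\|^p+C\|w(\hat\nu_n)\|^q\le0$ for $a$ small with a threshold independent of $n$. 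This yields $J(\hat\nu_n)\le\tfrac{a^2}{2}\|\nu_n\|^2-2^{1-2q}a^q\Psi(\nu_n)$, after which your final bookkeeping (choose $n$ with $(\|\nu_n\|^2-m)/\Psi(\nu_n)<2^{2-2q}a^{q-2}$) goes through verbatim. So either adopt this convexity-plus-absorption step, or supply a genuinely uniform proof of $\Psi(G(v))\ge\tfrac12\Psi(v)$; as written, that step is unsupported.
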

\begin{proof}
    Let $\hat{\nu}_n=a\nu_n$, where $\nu_n$ is defined in Lemma \ref{Lem2.2}, then $\hat{\nu}_n\in V\cap S_a$, and so
    \begin{equation*}
        c_a\leq J(\hat{\nu}_n).
    \end{equation*}
Using the fact that
$$\displaystyle\frac{\sqrt{\|\hat{\nu}_n\|_{L^2}^2-\|w(\hat{\nu}_n)\|_{L^2}^2}}{2\|\hat{\nu}_n\|_{L^2}}
    \in (\frac{1}{4},\frac{1}{2}),$$
    together with \eqref{eq1.3}, \eqref{eq1.5}, \eqref{G} and the convexity of $\Psi$, we
    infer that
    \begin{equation*}
        \begin{split}
            J(\hat{\nu}_n)&=\frac{a^2-\|w(\hat{\nu}_n)\|_{L^2}^2}{2a^2}\|\hat{\nu}_n\|^2-\frac{1}{2}\|w(\hat{\nu}_n))\|^2-\Psi(G(\hat{\nu}_n))\\
            &\leq \frac{1}{2}\|\hat{\nu}_n\|^2-\frac{1}{2}\|w(\hat{\nu}_n))\|^2-2\Psi(\frac{\sqrt{a^2-\|w(\hat{\nu}_n)\|_2^2}}{2a}\hat{\nu}_n)+\Psi(-w(\hat{\nu}_n))\\
            &\leq \frac{1}{2}\|\hat{\nu}_n\|^2-\frac{1}{2}\|w(\hat{\nu}_n))\|^2-2^{1-2q}\Psi(\hat{\nu}_n)+\Psi(-w(\hat{\nu}_n))\\
            &\leq \frac{1}{2}\|\hat{\nu}_n\|^2-\frac{1}{2}\|w(\hat{\nu}_n)\|^2- 2^{1-2q}\Psi(\hat{\nu}_n)+C\|w(\hat{\nu}_n)\|^p+C\|w(\hat{\nu}_n)\|^q\\
            &\leq \frac{a^2}{2}\|\nu_n\|^2-2^{1-2q}a^q\Psi(\nu_n)-\frac{1}{2}\|w(\hat{\nu}_n)\|^2+C\|w(\hat{\nu}_n)\|^p+C\|w(\hat{\nu}_n)\|^q.
        \end{split}
    \end{equation*}
    Note that the boundedness of $\{\nu_n\}$ and the fact that $G\in \mathcal{C}^1$ imply that
    \begin{equation*}
        -\frac{1}{2}\|w(\hat{\nu}_n)\|^2+C\|w(\hat{\nu}_n)\|^p+C\|w(\hat{\nu}_n)\|^q\leq 0
\end{equation*}
    for $a$ small enough. Thus,
    \begin{equation*}
        J(\hat{\nu}_n)\leq  \frac{a^2}{2}\|\nu_n\|^2-2^{1-2q}a^q\Psi(\nu_n)
        <\frac{ma^2}{2}
    \end{equation*}
    if
    \begin{equation*}
        \frac{\|\nu_n\|^2-m}{\Psi(\nu_n)}<2^{2-2q}a^{q-2}
    \end{equation*}
    which holds for $n$ large enough in view of Lemma \ref{Lem2.2}. Thus, there holds
    \begin{equation*}
        c_a\leq J(\hat{\nu}_n)<\frac{ma^2}{2}.
    \end{equation*}
This completes the proof.
\end{proof}

\begin{Lem}\label{Lem2.5}
 Under the assumptions of Theorem \ref{Thm1.1}, for any $\mu>0$, there exists $c(\mu)>0$ such that, for any $a \in(0, c(\mu))$, if $v \in V \cap S_a$ satisfies
        $$
        J(v) \leq \frac{1}{2}\left(m+\frac{\mu}{2} a^{\frac{p-2}{2}}\right)
        a^2,
        $$
        then
        $$
        \|v\|^2\leq\left(m+\mu a^{\frac{p-2}{2}}\right) a^2.
        $$
\end{Lem}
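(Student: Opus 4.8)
The plan is to compare $J$ on $V\cap S_a$ with the ``positive part'' functional $v\mapsto \tfrac12\|v\|^2-\Psi(v)$, and then to absorb the discrepancy $\Psi(v)$, which turns out to be of order $a^p$, into the gain $\tfrac\mu2 a^{(p-2)/2}a^2$ provided by the hypothesis, once $a$ is chosen small in terms of $\mu$.

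First I would observe that every $v\in V\cap S_a$ belongs to $M(v)$: since $v\in E^+$ we have $v^+=v$ and $v^-=0$, so the three defining conditions of $M(v)$ hold trivially (in particular $\|v^-\|=0\le \sqrt m\,\|v\|_{L^2}/2$). Hence, by Lemma \ref{Lem2.1} (applicable since $\|v\|_{L^2}=a$ is small),
\[
J(v)=I(G(v))=\max_{u\in M(v)}I(u)\ \ge\ I(v)=\tfrac12\|v\|^2-\Psi(v),
\]
the last identity because $v\in E^+$. Rearranging gives $\tfrac12\|v\|^2\le J(v)+\Psi(v)$.

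Next I would estimate $\Psi(v)$. By \eqref{eq1.5}, the continuous embeddings $E\hookrightarrow L^p,L^q$, and the equivalence of $\|\cdot\|$ with $\|\cdot\|_{L^2}$ on $V$ (so that $\|v\|<\sqrt{m+1}\,a$ for $v\in V\cap S_a$), one gets
\[
\Psi(v)\le C\big(\|v\|_{L^p}^p+\|v\|_{L^q}^q\big)\le C\big(\|v\|^p+\|v\|^q\big)\le C_0\,a^p,
\]
valid for all $a$ below a structural threshold $a_0>0$ depending only on $m$, $p$, $q$ and the constant in \eqref{eq1.5} (here we used $a^q\le a^p$ once $\|v\|\le1$, i.e. once $a\le (m+1)^{-1/2}$). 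Combining with the assumption $J(v)\le\tfrac12(m+\tfrac\mu2 a^{(p-2)/2})a^2$ yields
\[
\|v\|^2\ \le\ \Big(m+\tfrac\mu2 a^{\frac{p-2}{2}}\Big)a^2+2C_0\,a^p.
\]

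Finally I would tune $a$. Since $p>2$ we have $p-\tfrac{p+2}{2}=\tfrac{p-2}{2}>0$, so $2C_0 a^p\le \tfrac\mu2 a^{\frac{p+2}{2}}=\tfrac\mu2 a^{\frac{p-2}{2}}a^2$ whenever $a^{(p-2)/2}\le \mu/(4C_0)$; setting
\[
c(\mu):=\min\Big\{a_0,\ \big(\tfrac{\mu}{4C_0}\big)^{\frac{2}{p-2}}\Big\},
\]
for every $a\in(0,c(\mu))$ the displayed bound becomes $\|v\|^2\le ma^2+\mu a^{(p+2)/2}=(m+\mu a^{(p-2)/2})a^2$, which is the claim. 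There is no real obstacle here: the argument rests entirely on the elementary fact $v\in M(v)$, which makes $J(v)\ge I(v)$ and hence converts ``$J(v)$ close to $ma^2/2$'' into ``$\|v\|^2$ close to $ma^2$'' up to the $O(a^p)$ error $\Psi(v)$. The only thing requiring care is the bookkeeping of constants: keeping the $\mu$-independent threshold $a_0$ (large enough also for Lemma \ref{Lem2.1} to apply, so that $G(v),w(v)$ are defined) separate from the $\mu$-dependent threshold $(\mu/4C_0)^{2/(p-2)}$.
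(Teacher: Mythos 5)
Your proof is correct and follows essentially the same route as the paper: use $v\in M(v)$ to get $J(v)\ge I(v)=\tfrac12\|v\|^2-\Psi(v)$, bound $\Psi(v)\le C a^p$ via \eqref{eq1.5}, the Sobolev embedding and the norm equivalence on $V$, and then choose $c(\mu)\sim(\mu/C)^{2/(p-2)}$ to absorb the $Ca^{p-2}$ term into $\tfrac{\mu}{2}a^{(p-2)/2}$. You merely spell out the bookkeeping (the $a^q\le a^p$ reduction and the $\mu$-independent threshold $a_0$) that the paper compresses into "for $a$ small enough."
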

\begin{proof}
By the definition of $J$ and $v\in M(v)$, we have
    \begin{equation*}
        J(v)  \geq I(v)  =\frac{1}{2}\|v\|^2-\Psi(v).
    \end{equation*}
    Thus, by Sobolev inequality and the equivalent of norms on $V$, one has
    \begin{equation*}
        \|v\|^2\leq\left(m+\frac{\mu}{2} a^{\frac{p-2}{2}}\right) a^2+Ca^p=\left(m+\frac{\mu}{2} a^{\frac{p-2}{2}}+Ca^{p-2}\right) a^2,
\end{equation*}
which holds for $a$ small enough. Let
\begin{equation*}
    c(\mu)=\left(\frac{\mu}{2 C}\right)^{\frac{2}{p-2}},
\end{equation*}
and the conclusion holds.
\end{proof}
\begin{Rek}\label{X}
Lemma \ref{Lem2.5} shows that, without loss of generality, each
minimizing sequence of $J$ restricted to $V \cap S_a$ remains
uniformly bounded away from the boundary of $V$. Thus, for $a>0$
small enough, if $v \in V \cap S_a$ and $J(v)$ is close to $c_a$, we
can assume that $v$ belongs to the following set
\begin{equation*}
    X_a=\left\{v\in E^+\mid\|v\|_{L^2}=a~\text{and}~\|v\|^2\leq \left(m+a^{\frac{p-2}{2}}\right)a^2\right\}\subset V \cap S_a.
\end{equation*}
Note that, $J_{|X_a}\in \mathcal{C}^1(X_a,\mathbb{R})$ and for all $v\in X_a$
\begin{equation*}
\left\langle J_{|X_a}^{\prime}\left(v\right),
z\right\rangle=\left\langle J^{\prime}\left(v\right),
z\right\rangle~~\text{for}~~z\in T_v,
\end{equation*}
where
\begin{equation*}
T_v=\left\{z\in E^+\mid\Re (v,z)_{L^2}=0\right\}.
\end{equation*}
\end{Rek}

\subsection{Proof of Theorem \ref{Thm1.1}}
In this subsection, we give the proof of Theorem \ref{Thm1.1}.
First, we recall the definition of the Palais-Smale condition which
is crucial in applying the critical point theory. We say that
$J_{\mid X_a}$ satisfies the Palais-Smale condition at a level $c
\in \mathbb{R},(P S)_c$ in short, if any sequence
$\left\{u_n\right\} \subset X_a$ with $J\left(u_n\right) \rightarrow
c$
 and $J_{\mid X_{a}}^{\prime}(u_n) \rightarrow 0$ contains a convergent subsequence.
 \begin{Lem}\label{Lem3.1}
Suppose that the assumptions of Theorem \ref{Thm1.1} are satisfied.
Then, for $a$ small enough, the functional $J$ satisfies $(PS)_c$
condition restricted to $X_a$ for $c<ma^2/2$.
 \end{Lem}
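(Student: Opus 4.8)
The plan is to verify the Palais–Smale condition for $J_{|X_a}$ at a level $c<ma^2/2$ by first extracting boundedness in $E$, then using the reduction $G$ together with the equivalence of norms on $V$ to control the negative part $w(v_n)$, and finally recovering strong convergence by a concentration-compactness / vanishing argument that exploits the decay hypothesis $(f_4)$.

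First I would take a sequence $\{v_n\}\subset X_a$ with $J(v_n)\to c<ma^2/2$ and $J'_{|X_a}(v_n)\to 0$. Since $\|v_n\|_{L^2}=a$ and, by the very definition of $X_a$, $\|v_n\|^2\le (m+a^{(p-2)/2})a^2$, the sequence is automatically bounded in $E^+$; this is exactly the point of Remark \ref{X}, and it is the reason we work on $X_a$ rather than on all of $V\cap S_a$. Passing to a subsequence, $v_n\rightharpoonup v$ in $E^+$, $v_n\to v$ in $L^q_{\loc}$ for $q\in[1,3)$, and a.e. I would then record that $G(v_n)=h(v_n,w(v_n))$ with $\|w(v_n)\|\le \tfrac{\sqrt m}{2}\|v_n\|_{L^2}=\tfrac{\sqrt m}{2}a$, so $\{G(v_n)\}$ is bounded in $E$ as well, and $\|h(v_n,w(v_n))\|\le C\|v_n\|_{L^2}=Ca$ by \eqref{eq2.3}. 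Using $\langle \partial_w(I\circ h)(v_n,w(v_n)),\cdot\rangle=0$ and the uniform strict concavity estimate $\partial^2_w(I\circ h)(v,w)[z,z]\le-\tfrac14\|z\|^2$ from Lemma \ref{Lem2.1}, I would show that $w(v_n)\to w(v)$ strongly in $E^-$ once $v_n\to v$ is known, so the whole difficulty reduces to proving $v_n\to v$ strongly in $E^+$.

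To get strong convergence of $v_n$, the key is a nonvanishing lemma: I claim $\limsup_n \sup_{y\in\mathbb{R}^3}\int_{B_1(y)}|v_n|^2\,dx>0$. If this failed, Lions' vanishing lemma would give $v_n\to 0$ in $L^q(\mathbb{R}^3,\mathbb{C}^4)$ for all $q\in(2,3)$, hence $\Psi(G(v_n))\to 0$ by \eqref{eq1.5} and $\|h(v_n,w(v_n))\|_{L^q}\le C\|v_n\|_{L^q}+C\|w(v_n)\|_{L^q}$ — wait, one must also argue $w(v_n)\to 0$; this follows from the first-order condition and strict concavity, since $w=0$ is a near-critical point in that case. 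Then $J(v_n)=\tfrac{a^2-\|w(v_n)\|_{L^2}^2}{2a^2}\|v_n\|^2-\tfrac12\|w(v_n)\|^2-\Psi(G(v_n))\to \tfrac12\lim\|v_n\|^2\ge \tfrac{ma^2}{2}$ by \eqref{eq2.1}, contradicting $c<ma^2/2$. Hence nonvanishing holds; the translated sequence has a nontrivial weak limit, and here the decay assumption $(f_4)$ (together with $r\in L^\infty$) is what prevents the mass from escaping to infinity: for $|y_n|\to\infty$ the nonlinear contribution $\int f(x,|v_n|)|v_n|^2$ localized near $y_n$ would tend to $0$, forcing that piece of $v_n$ to behave like a free (linear) solution and again pushing $J$ up to $ma^2/2$. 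So the weak limit is attained at bounded translates, i.e.\ $v\neq 0$ and, by a standard splitting (Brezis–Lieb for $\Psi$ plus the $(PS)$ condition on $T_v$), $v_n\to v$ in $E^+$.

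The main obstacle I anticipate is precisely handling the reduced functional's dependence on $w(v_n)$ while ruling out dichotomy: one cannot directly test $J'_{|X_a}(v_n)$ against $v_n-v$ because the constraint $\|\cdot\|_{L^2}=a$ and the implicit map $w(\cdot)$ intertwine the $E^+$ and $E^-$ components. The clean way around this is to work with $G(v_n)$ in the full space $E$, use $\Re(H\circ G(v),z)_{L^2}=0$ from \eqref{eq2.4} to absorb the $E^-$-direction, and reduce all estimates to the $E^+$-variable via the Lipschitz continuity of $G$ from Lemma \ref{Lem2.1}; the Lagrange multiplier $\omega$ appearing from the constraint is bounded (close to $m$) by Lemma \ref{Lem2.5} and \eqref{numero}, which keeps the operator $H$ uniformly away from the essential spectrum on the relevant range. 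Combining the nonvanishing step, the decay $(f_4)$ to exclude escape to infinity, and Brezis–Lieb to pass to the limit in $\Psi$, one concludes that $\{v_n\}$ has a subsequence converging strongly in $X_a$, which is the desired $(PS)_c$ property.
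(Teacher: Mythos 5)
Your skeleton assembles the right ingredients (boundedness from the definition of $X_a$, the level restriction $c<ma^2/2$, the decay $(f_4)$, some control of the multiplier), and your exclusion of vanishing via the reduced functional is essentially sound. But the heart of the lemma --- actually deducing strong convergence --- is asserted rather than proved, and the route you sketch has concrete gaps. (i) You never establish the identity that converts the constrained Palais--Smale information into information on the full gradient, namely $\langle J'(v_n),z\rangle=\frac{\sqrt{a^2-\|w(v_n)\|_{L^2}^2}}{a}\,\Re\left(H\circ G(v_n),z\right)_{L^2}$ for $z\in T_{v_n}$, which is what yields $H\circ G(v_n)\to 0$ in $H^{-1/2}$ and allows passage to the limit equation; without it, ``Brezis--Lieb for $\Psi$ plus the (PS) condition on $T_v$'' does not give norm convergence for a strongly indefinite quadratic form. (ii) Your concentration--compactness step is incomplete: since $J$ is not translation invariant, nonvanishing only produces translates $y_n$ carrying a fixed amount of mass and you cannot translate back; the case of \emph{partial} escape (dichotomy) is exactly the obstacle you name, and the argument you offer (``that piece behaves like a free solution and pushes $J$ up to $ma^2/2$'') fails when only part of the mass escapes, so no contradiction is reached. (iii) The decisive quantitative fact is the strict multiplier bound $\omega_a=\lim_n\kappa(G(v_n))\le \frac{2}{a^2}\lim_n J(v_n)=\frac{2c}{a^2}<m$, which follows from \eqref{eq1.2} and is precisely where the hypothesis $c<ma^2/2$ enters; your parenthetical ``bounded (close to $m$)'' citing Lemma \ref{Lem2.5} and \eqref{numero} does not give this strict inequality, and without $\omega_a<m$ the $L^2$-term cannot be absorbed.

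For comparison, the paper's proof needs no Lions lemma and no profile splitting: after proving the identity in (i) it extracts $G(v_n)\rightharpoonup u_a$ and $\kappa(G(v_n))\to\omega_a<m$, notes that $(f_4)$ makes the nonlinear term compact so that $\Re\int_{\mathbb{R}^3}\bigl(f(x,|G(v_n)|)G(v_n)-f(x,|u_a|)u_a\bigr)\cdot(G(v_n)-u_a)^{+}dx\to0$, and then tests the approximate and limit equations against $(G(v_n)-u_a)^{\pm}$ to get $\|(G(v_n)-u_a)^{+}\|^{2}\le o_n(1)+\omega_a\|(G(v_n)-u_a)^{+}\|_{L^2}^{2}$; the spectral gap \eqref{eq2.1} together with $\omega_a<m$ absorbs the right-hand side, giving $G(v_n)\to u_a$ in $E$ and hence $v_n\to v_a$ in $X_a$. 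This absorption step is exactly what your proposal replaces by an assertion, so as written the proof is not complete.
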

\begin{proof}
Let $\{v_n\}\subset X_a$ be any sequence such that
\begin{equation*}
J(v_n)\to c<\frac{ma^2}{2}~~\text{and}~~J_{|X_a}^\prime (v_n)\to
0~~\text{as}~n\to \infty.
\end{equation*}
Without loss of generality, we may assume that,
\begin{equation*}
    |\left\langle J ^{\prime}\left(v_n\right), z\right\rangle|\leq \frac{1}{n}\|z\|,~~\forall z \in T_{v_n}.
\end{equation*}
We claim that,
\begin{equation}\label{eq3.1}
    \langle J^\prime (v_n),z \rangle=\frac{\sqrt{a^2-\|w(v_n)\|_{L^2}^2}}{a}
   \Re \left(H\circ G(v_n),z\right)_{L^2},~~\forall z\in T_{v_n}.
\end{equation}
We first note that for any $z\in T_{v_n}$, we have
$$
\Re(G(v_n),G^\prime(v_n)z)_{L^2}=\frac{\sqrt{\|v_n\|_{L^{2}}^{2}-\|w(v_n)\|_{L^{2}}^{2}}}{\|v_n\|_{L^{2}}^{2}}\Re(v_n,z)_{L^2}=0.
$$
Thus
\begin{equation}\label{eq3.2}
    \begin{split}
    \langle J^\prime (v_n),z \rangle&=\langle I^\prime(G(v_n)),G^\prime (v_n)z \rangle-\kappa(G(v_n))\Re\left(G(v_n)),G^\prime (v_n)z\right)_{L^2}\\
    &=\Re\left(H\circ G(v_n),G^\prime (v_n)z\right)_{L^2}.
    \end{split}
\end{equation}
A direct computation yields
\begin{equation}\label{eq3.3}
    G^\prime(v_n)z=\frac{\sqrt{a^2-\|w(v_n)\|_{L^2}^2}}{a}z-\frac{\Re(w(v_n),w^\prime(v_n)z)_{L^2}}{a\sqrt{a^2-\|w(v_n)\|_{L^2}^2}}v_n+w^\prime(v_n)z
\end{equation}
and, by the definition of $h$, we have
\begin{equation}\label{eq3.4}
   \Re (H\circ G(v_n),v_n)_{L^2}=0.
\end{equation}
Thus, it follows from \eqref{eq3.2}-\eqref{eq3.4} and \eqref{eq2.4}
that
$$
\begin{aligned}
    \langle J^\prime (v_n),z \rangle & =\Re\left(H\circ G(v_n),(G^\prime (v_n)z)^+\right)_{L^2} =\frac{\sqrt{a^2-\|w(v_n)\|_{L^2}^2}}{a}
    \Re\left(H\circ G(v_n),z\right)_{L^2}
\end{aligned}
$$
and then claim \eqref{eq3.1} holds.

\medskip
For any $z\in E^+$, it is easy to see that
\begin{equation*}
    z-\frac{\Re(v_n,z)_{L^2}}{a^2}v_n\in T_{v_n}.
\end{equation*}
Therefore, in view of \eqref{eq3.1}, we obtain
$$
\begin{aligned}
    \Re\left(H\circ G(v_n),z\right)_{L^2}& =   \Re\left(H\circ G(v_n),z-\frac{\Re(v_n,z)_{L^2}}{a^2}v_n\right)_{L^2} \\
    &=\frac{a}{\sqrt{a^2-\|w(v_n)\|_{L^2}^2}}\left\langle J^\prime (v_n),z-\frac{\Re(v_n,z)_{L^2}}{a^2}v_n \right\rangle\\
    &\leq \frac{a}{n\sqrt{a^2-\|w(v_n)\|_{L^2}^2}}\left\|z-\frac{\Re(v_n,z)_{L^2}}{a^2}v_n \right\|\\
    &\leq \frac{a}{n\sqrt{a^2-\|w(v_n)\|_{L^2}^2}}\left(1+\sqrt{\frac{m+1}{m}}\right)\|z\|\\
    &\leq \frac{C}{n}\|z\|,
\end{aligned}
$$
and so, from \eqref{eq2.4} again, we get
\begin{equation}\label{H}
    H\circ G(v_n)\to 0~\text{in}~H^{-1/2}\left(\mathbb{R}^3,\mathbb{C}^4\right).
\end{equation}
Up to a subsequence, we may assume that
\begin{equation*}
    G(v_n)\rightharpoonup u_a ~~~\text{in}~~~ E ~~~\text{and}~~~\kappa(G(v_n))\to \omega_a \
    \ \mbox{in} \ \ \mathbb{R}
\end{equation*}
and then $v_n$ and $u_a$ satisfy the following equations, respectively
\begin{equation}\label{eq3.6}
    -i\alpha\cdot \nabla G(v_n)+m\beta G(v_n)=f(x,|G(v_n)|)G(v_n)+\kappa(G(v_n))G(v_n)+o_n(1),
\end{equation}
\begin{equation}\label{eq3.7}
    -i\alpha\cdot \nabla u_a+m\beta u_a=f(x,|u_a|)u_a+\omega_a u_a.
\end{equation}
The next goal is to show that $G(v_n)\to u_a$ in $E$. Note that, in
view of \eqref{eq1.2} and \eqref{H}, we obtain
$$
\begin{aligned}
    \omega_a&=\lim_{n\to \infty}\kappa(G(v_n))\\
    &=\frac{1}{a^2}\lim_{n\to \infty}\left[\Re(H_0G(v_n),G(v_n))_{L^2}-\int_{\mathbb{R}^3}f(x,|G(v_n)|)|G(v_n)|^2dx\right]\\
    & =\frac{1}{c^2}\lim_{n\to \infty}\left[2J(v_n)+2\int_{\mathbb{R}^3}F(x,|G(v_n)|)dx-\int_{\mathbb{R}^3}f(x,|G(v_n)|)|G(v_n)|^2dx\right]\\
    & \leq \frac{2}{a^2}\lim_{n\to \infty}J(v_n)\\
    & =\frac{2c}{a^2}<m,
\end{aligned}
$$
The scalar product of \eqref{eq3.6} and \eqref{eq3.7} with
$(G(v_{n})-u_{a})^{+}$, respectively, we get
\begin{equation}\label{eq3.8}
    \begin{split}
        \|(G(v_n)-u_a)^+\|^2=&\Re\int_{\mathbb{R}^3}(f(x,|G(v_n)|)G(v_n)-f(x,|u_a|)u_a)\cdot (G(v_n)-u_a)^+dx\\
        &+\Re\left(\kappa(G(v_n))G(v_n)-\omega_a u_a,(G(v_n)-u_a)^+\right)_{L^2}+o_n(1).
    \end{split}
\end{equation}
Note that, from the assumption $(f_4)$, we can check
that
\begin{equation}\label{eq3.9}
    \Re\int_{\mathbb{R}^3}(f(x,|G(v_n)|)G(v_n)-f(x,|u_a|)u_a)\cdot(G(v_n)-u_a)^+dx\to 0
\end{equation}
as $n\to \infty$. Moreover, we have
\begin{equation}\label{eq3.10}
    \begin{split}
        &\quad  \left|\Re\left(\kappa(G(v_n))G(v_n)-\omega_a u_a,(G(v_n)-u_a)^+\right)_{L^2}\right|\\
        &=\left|(\kappa(G(v_n))-\omega_a)\Re(G(v_n),(G(v_n)-u_a)^+)_{L^2}+ \omega_a\|(G(v_n)-u_a)^+\|_{L^2}^2\right|\\
        &\leq o_n(1)\|(G(v_n)-u_a)^+\|_{L^2}+\omega_a\|(G(v_n)-u_a)^+\|_{L^2}^2.
    \end{split}
\end{equation}
Thus, it follows from \eqref{eq3.8}-\eqref{eq3.10} that $G^+(v_n)\to
u_a^+$ in $E$. Similarity, we can obtain $G^-(v_n)\to u_a^-$ in $E$
and then $G(v_n)\to u_a$ in $E$. By the definition of $G$ and $v_n$, we have $v_n\rightarrow v_a$ in $E$, where $v_a$ belongs to $X_a$.
\end{proof}
\medskip

\noindent {\bf The proof of Theorem 1.1. (Existence)} We divide the proof into
two steps:\medskip

\textbf{Step $1$:} Existence of $(PS)_{c_{a}}$-sequence of $J$ restricts to
$X_a$.

\medskip
Let $\{v_n\}\subset V\cap S_a$ be a minimizing sequence
with respect to the $J$ such that
\begin{equation*}
    J(v_n)\to c_a.
\end{equation*}
 Then, by Lemmas \ref{Lem2.3} and
\ref{Lem2.5}, for $c>0$ small enough, up to a subsequence, we can
assume that
\begin{equation*}
    \|v_n\|^2\leq\left(m+\frac{1}{2} a^{\frac{p-2}{2}}\right) a^2,~\forall n \in \mathbb{N}.
\end{equation*}
By the Ekeland variational principle on the complete metric space
$X_a$, there exists $v_n^*\in X_a$ such that
\begin{itemize}
    \item[$(1)$] $J(v_n^*)\leq J(v_n)$.
    \item[$(2)$] $\|v_n-v_n^*\|\leq \displaystyle\frac{1}{n}$.
    \item[$(3)$] $J(v_n^*)\leq J(v)+\displaystyle\frac{1}{n}\|v-v_n^*\|$ for all $v\in X_a$.
\end{itemize}
Clearly, $J\left(v_{n}^*\right) \rightarrow c_a$ and we can assume
that $v_n^*\rightharpoonup v_a$ in $E^+$ and satisfies
$$
\|v_n^*\|^2\leq\left(m+\frac{3}{4} a^{\frac{p-2}{2}}\right) a^2.
$$
Given $z\in T_{v_n^*}=\left\{z\in E^+\mid\Re(v_n^*,z)_{L^2}=0\right\}$, there is
$\delta>0$ small enough such that the path $\gamma:(-\delta,
\delta)\longrightarrow S_a \cap E^+$ defined by
$$
\gamma(t)=\sqrt{1-t^2 \frac{\|z\|_{L^2}^2}{a^2}} v_n^*+t z,
$$
belongs to $\mathcal{C}^1\left((-\delta,\delta),S_a\cap E^+\right)$ and
satisfies
\begin{equation*}
    \gamma(0)=v_n^*~~~\text{and}~~~\gamma^\prime(0)=z.
\end{equation*}
Hence,
\begin{equation*}
    J(\gamma(t))-J(v_n^*)\geq -\frac{1}{n}\left\|\gamma(t)-v_n^*\right\|,
\end{equation*}
and in particular,
$$
\frac{J(\gamma(t))-J(\gamma(0))}{t}=\frac{J(\gamma(t))-J(v_n^*)}{t}\geq
-\frac{1}{n}\left\|\frac{\gamma(t)-v_n^*}{t}\right\|=-\frac{1}{n}\left\|\frac{\gamma(t)-\gamma(0)}{t}\right\|,~~\forall
t\in (0,\delta).
$$
Since $J\in \mathcal{C}^1$, taking the limit of $t\to 0^+$, we get
\begin{equation*}
    \left\langle J^{\prime}\left(v_n^*\right), z\right\rangle\geq -\frac{1}{n}\|z\|.
\end{equation*}
Replacing $z$ by $-z$, we obtain
\begin{equation*}
    |\left\langle J ^{\prime}\left(v_n^*\right), z\right\rangle|\leq \frac{1}{n}\|z\|,~~\forall z
    \in
    T_{v_n^*}.
\end{equation*}
It follows, from Remark \ref{X}, that $\{v_n^*\}$ is a $(PS)_{c_a}$-sequence for $J$ restricted to $X_a$.\medskip

\textbf{Step $2$}: The existence of solutions for problem
\eqref{eq1.1}.\medskip

 Since $c_a<\displaystyle\frac{ma^2}{2}$ by Lemma \ref{Lem2.3},
so, Step $1$ and Lemma \ref{Lem3.1}  ensure that there is $u_a$ such
that $G(v_n^*) \rightarrow u_a$ in
$H^{1/2}\left(\mathbb{R}^3,\mathbb{C}^4\right)$ and
$k(G(v_n^*))\rightarrow \omega_a$ in $\mathbb{R}$. Therefore,
arguing as in the proof of Lemma \ref{Lem3.1} and using \eqref{H},
we can conclude that
\begin{equation*}
    -i\alpha\cdot \nabla u_a+m\beta u_a=f(x,|u_a|)u_a+\omega_a u_a,
\end{equation*}
and
\begin{equation*}
\|u_a\|_{L^2}=\lim_{n\to \infty}\|G(v_n^*)\|_{L^2}=\lim_{n\to
\infty}\|v_n^*\|_{L^2}=a,
\end{equation*}
and the existence is complete.

\medskip

{\bf (Bifurcation)} Note that, from the proof of Lemma \ref{Lem3.1},
we have
\begin{equation*}
 \omega_a\leq \frac{2c_a}{a^2}<m.
\end{equation*}
Moreover, by the definition of $h$  (see \eqref{hv}), for $a$ small enough
\begin{equation*}
\kappa(G(v_n^*))=\frac{\|G^+(v_n^*)\|^2}{\|G^+(v_n^*)\|_{L^2}^2}-\frac{\Re
\displaystyle\int_{\mathbb{R}^3}f(x,|G(v_n^*)|)G(v_n^*)\cdot G^+(v_n^*)dx}{\|G^+(v_n^*)\|_{L^2}^2}\geq
m-Ca^{p-2}.
\end{equation*}
Thus,
\begin{equation*}
    m-Ca^{p-2}\leq \omega_a\leq \frac{2c_a}{a^2}<m,
\end{equation*}
which implies that
\begin{equation*}
\omega_a\to m~~\text{as}~~a\to 0^+.
\end{equation*}
On the other hand,
\begin{equation*}
    \|u_a\|= \lim_{n\to \infty}\|G(v_n^*)\|\leq \lim_{n\to \infty} \sqrt{\|v_n^*\|^2+\|w(v_n^*)\|^2}\leq \frac{\sqrt{5m+4}}{2}a\to 0~~\text{as}~~a\to 0^+.
\end{equation*}
Thus, $m$ is a bifurcation point on the left of equation
\eqref{eq1.1}.

\section{Multiplicity of Normalized solutions}

In this section, we always assume that $a>0$ is fixed and we set
$X=X_a$. To get multiple normalized solutions, we will use some
arguments used in the last section in combination with the
multiplicity theorem of the Ljusternik-Schnirelmann type for
$J_{|X}$ established in \cite{jeanjean1992}. First, let us recall
the definition of genus. Let $\Sigma(X)$ be the family of closed
symmetric subsets of $X$. For any nonempty set $A \in \Sigma(X)$,
 let $\gamma(A)$ denote the Kransnoselskii genus of $A$. For the definition and properties of $\gamma(A)$, we refer to
 \cite{willem}, and we list here some of them without proofs.

\begin{Lem}
Let $A, B \in \Sigma(X)$, then the following statements hold:
    \begin{itemize}
    \item[$(i)$] if $\gamma(A) \geq 2$, then $A$ contains infinitely many distinct points;
    \item[$(ii)$] if $A \subset B$, then $\gamma(A) \leq \gamma(B)$;
    \item[$(iii)$] let $\Omega \subset \mathbb{R}^k$ be an open bounded symmetric neighborhood of 0, then $\gamma(\partial \Omega)=k$;
    \item[$(iv)$] if there exists an odd continuous mapping $\psi: \mathbb{S}^{k-1} \rightarrow A$, then $\gamma(A) \geq k$.
\end{itemize}
\end{Lem}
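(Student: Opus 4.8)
The plan is to obtain all four statements directly from the definition of the Krasnoselskii genus: for a nonempty $A\in\Sigma(X)$, $\gamma(A)$ is the smallest integer $k\geq 1$ for which there exists an odd continuous map $A\to\mathbb{R}^k\setminus\{0\}$, with $\gamma(A)=+\infty$ if no such $k$ exists and $\gamma(\emptyset)=0$; note also that, since $X=X_a$ does not contain $0$, every $A\in\Sigma(X)$ automatically avoids the origin. Items $(i)$, $(ii)$ and $(iv)$ are elementary consequences of this definition, while $(iii)$ (and, strictly speaking, $(iv)$) rely on the Borsuk--Ulam theorem, equivalently on Borsuk's theorem asserting that the Brouwer degree of an odd map is odd.

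First I would handle $(i)$ by contraposition: if $A$ is finite it splits into finitely many antipodal pairs, and sending one element of each pair to $1$ and its opposite to $-1$ defines an odd map into $\mathbb{R}\setminus\{0\}$ that is continuous because $A$ is discrete; hence $\gamma(A)\leq 1$, so $\gamma(A)\geq 2$ forces $A$ to be infinite. For $(ii)$, if $k=\gamma(B)<\infty$ with odd continuous witness $h\colon B\to\mathbb{R}^k\setminus\{0\}$, then $h|_A$ is an admissible competitor for $A$, so $\gamma(A)\leq k$; the case $\gamma(B)=+\infty$ is vacuous. For $(iv)$, suppose $\gamma(A)=j\leq k-1$ and let $h\colon A\to\mathbb{R}^j\setminus\{0\}$ be odd and continuous; then $h\circ\psi\colon\mathbb{S}^{k-1}\to\mathbb{R}^{k-1}\setminus\{0\}$ is odd and continuous, which is impossible by Borsuk--Ulam, so $\gamma(A)\geq k$.

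The main obstacle is $(iii)$. The bound $\gamma(\partial\Omega)\leq k$ is immediate, since $0$ is an interior point of $\Omega$ and hence $0\notin\partial\Omega$, so the inclusion $\partial\Omega\hookrightarrow\mathbb{R}^k\setminus\{0\}$ is odd and continuous. For the reverse inequality I would assume, for contradiction, $\gamma(\partial\Omega)=j\leq k-1$ with odd continuous witness $h\colon\partial\Omega\to\mathbb{R}^{k-1}\setminus\{0\}$, extend $h$ by the Tietze extension theorem to a continuous map $\tilde h\colon\overline{\Omega}\to\mathbb{R}^{k-1}$, and symmetrize by setting $\hat h(x)=\tfrac12\big(\tilde h(x)-\tilde h(-x)\big)$, which is an odd continuous extension of $h$ to $\overline{\Omega}$ taking values in the hyperplane $\mathbb{R}^{k-1}\times\{0\}\subset\mathbb{R}^k$. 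Since $\hat h$ is odd and $0\notin\hat h(\partial\Omega)$, Borsuk's theorem forces $\deg(\hat h,\Omega,0)$ to be odd, in particular nonzero; on the other hand the image of $\hat h$ lies in a hyperplane, so perturbing $0$ to a regular value lying off that hyperplane (which has no preimage) shows $\deg(\hat h,\Omega,0)=0$, a contradiction. Hence $\gamma(\partial\Omega)\geq k$, and together with the upper bound we get equality. Since all of this is standard, in the paper I would merely quote Borsuk's theorem (or the Lusternik--Schnirelmann--Borsuk theorem) and point to \cite{willem} for complete details.
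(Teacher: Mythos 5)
Your proof is correct, but note that the paper does not prove this lemma at all: it explicitly lists these properties of the Krasnoselskii genus ``without proofs'' and refers the reader to Willem's \emph{Minimax Theorems} \cite{willem}. What you have written is essentially the standard textbook argument that one finds there: $(i)$ and $(ii)$ straight from the minimality in the definition of $\gamma$, $(iv)$ from Borsuk--Ulam applied to $h\circ\psi$, and $(iii)$ via the Tietze extension, odd symmetrization, and the degree-theoretic contradiction (Borsuk's theorem gives an odd, hence nonzero, degree, while the image lying in a proper hyperplane forces the degree to vanish) --- i.e.\ the Lusternik--Schnirelmann--Borsuk theorem. You were also right to observe the one hypothesis that is implicit in the paper's setting, namely that sets in $\Sigma(X)$ avoid the origin because $X=X_a\subset S_a$ with $a>0$, which is what makes the definition of genus and your argument for $(i)$ and the upper bound in $(iii)$ legitimate. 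So there is no gap; your write-up simply supplies the standard proof that the authors chose to cite rather than reproduce, and citing \cite{willem} as you propose would match the paper's treatment.
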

For each $k \in \mathbb{N}$, we define
$$
\Gamma_k:=\left\{A \in \Sigma(X)\mid \gamma(A) \geq k\right\}.
$$

\setcounter{Thm}{0}
\renewcommand{\theThm}{\Alph{Thm}}
\begin{Thm}
Assume that the functional $J$ satisfies $(PS)_c$ condition restricted to
$X$ for $c<\frac{1}{2}ma^2$ and set
    \begin{equation*}
        b_k=\inf_{A\in \Gamma_k}\sup_{v\in A}J(v),~~k\geq 1.
    \end{equation*}
    Then
    \begin{itemize}
        \item[$(1)$] $b_k$ is a critical value of $J_{|X}$ provided  that $\displaystyle b_k<\frac{ma^2}{2}$.
        \item[$(2)$] denoting by $K_b$ the set of critical points of $J_{\mid X}$ at a level $b\in \mathbb{R}$, if
        $$
        b_k=b_{k+1}=\cdots=b_{k+r-1}=:b<\frac{ma^2}{2}
        $$ for some $k,r\geq 1$,
        then $\gamma(K_b)\geq r$. In particular, $J_{\mid X}$ has infinitely many critical points at the level $b$ if $r \geq 2$.
        \item[$(3)$] if $b_k<\displaystyle\frac{ma^2}{2}$ for all $k\geq 1$, then there exist infinitely many critical points $\{v_a^n\}$ for $J_{\mid X}$ and these critical points satisfy $J(v_a^n)<\displaystyle\frac{ma^2}{2}$.
    \end{itemize}
\end{Thm}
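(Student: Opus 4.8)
The plan is to reproduce the classical Ljusternik--Schnirelmann minimax argument, adapted to the fact that $J_{|X}$ is an \emph{even} $\mathcal{C}^1$ functional (its evenness in the application follows from that of $I$ and the oddness of the map $G$ from Lemma \ref{Lem2.1}) constrained to the set $X=X_a$, on which $0\notin X$ since $\|v\|_{L^2}=a>0$. The two ingredients I would set up first are: (a) the elementary properties of the Krasnoselskii genus collected in the lemma above, supplemented by the subadditivity inequality $\gamma(A)\le\gamma(\overline{A\setminus N})+\gamma(N)$ for $N\in\Sigma(X)$ a neighbourhood, and by the fact that a compact symmetric $K\subset X$ with $0\notin K$ has finite genus and a closed symmetric neighbourhood $N$ in $X$ with $\gamma(N)=\gamma(K)$; and (b) a quantitative, $\mathbb{Z}_2$-equivariant deformation lemma on $X$.

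For the deformation lemma: fix $c<\tfrac12 ma^2$. By Lemma \ref{Lem2.5} and Remark \ref{X}, for $\varepsilon>0$ small the strip $\{v\in X:|J(v)-c|\le 2\varepsilon\}$ lies in the interior of $X$, uniformly away from the boundary $\{\|v\|^2=(m+a^{(p-2)/2})a^2\}$, so that boundary is irrelevant near the level $c$. Assuming $K_c=\emptyset$ — or, after removing an open symmetric neighbourhood $N$ of $K_c$, on $\{|J-c|\le2\varepsilon\}\setminus N$ — the $(PS)_c$ condition yields a positive lower bound for $\|J'_{|X}\|$ there; constructing an odd, locally Lipschitz pseudo-gradient field tangent to $X$, truncating it outside the strip, and integrating it via Cauchy--Lipschitz produces $\eta\in\mathcal{C}([0,1]\times X,X)$ such that every $\eta(t,\cdot)$ is an odd homeomorphism of $X$, $\eta(t,\cdot)=\mathrm{id}$ off $\{|J-c|\le2\varepsilon\}$, and $\eta(1,\{J\le c+\varepsilon\}\setminus N)\subset\{J\le c-\varepsilon\}$. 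Since an odd homeomorphism sends closed symmetric sets to closed symmetric sets and does not lower the genus, this is precisely what the minimax scheme requires.

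Granting this, the three assertions follow in the usual way. For $(1)$, if $b_k<\tfrac12 ma^2$ were not a critical value, apply the deformation lemma at $c=b_k$ with $N=\emptyset$; pick $A\in\Gamma_k$ with $A\subset\{J\le b_k+\varepsilon\}$; then $\eta(1,A)\in\Gamma_k$ and $\eta(1,A)\subset\{J\le b_k-\varepsilon\}$, contradicting the definition of $b_k$. For $(2)$, suppose $b_k=\dots=b_{k+r-1}=b<\tfrac12 ma^2$ and $\gamma(K_b)\le r-1$; take a closed symmetric neighbourhood $N$ of $K_b$ with $\gamma(N)\le r-1$, apply the deformation lemma with this $N$, and choose $A\in\Gamma_{k+r-1}$ with $A\subset\{J\le b+\varepsilon\}$; then $\overline{A\setminus N}$ is closed and symmetric, avoids the interior of $N$, has $\gamma(\overline{A\setminus N})\ge(k+r-1)-(r-1)=k$, hence $\eta(1,\overline{A\setminus N})\in\Gamma_k$ lies in $\{J\le b-\varepsilon\}$, contradicting $b_k=b$; therefore $\gamma(K_b)\ge r$, and when $r\ge2$ property $(i)$ of the genus forces $K_b$ to be infinite. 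For $(3)$, since $\Gamma_{k+1}\subset\Gamma_k$ the sequence $(b_k)$ is non-decreasing and, by hypothesis, bounded above by $\tfrac12 ma^2$; if it is strictly increasing then $(1)$ gives infinitely many distinct critical values below $\tfrac12 ma^2$, while if $b_k=b_{k+1}$ for some $k$ then $(2)$ with $r=2$ makes $K_{b_k}$ infinite — in either case $J_{|X}$ has infinitely many critical points, all at levels $<\tfrac12 ma^2$.

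The only genuinely delicate point is the equivariant deformation lemma on $X$: one must build a flow that simultaneously stays on the $L^2$-sphere $\|v\|_{L^2}=a$, never reaches $\partial X$, and is odd. The boundary issue is dissolved by Remark \ref{X}, which confines the relevant sublevel sets to the interior of $X$ for $c<\tfrac12 ma^2$, so that after localizing near $c$ the classical $\mathbb{Z}_2$-equivariant construction (invariant cut-offs, odd pseudo-gradients, the Cauchy--Lipschitz flow) carries over verbatim; everything else is bookkeeping with the genus. This is in essence \cite[Theorem 6.2]{jeanjean1992}, whose argument we would follow.
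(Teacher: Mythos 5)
Your proposal is correct and follows essentially the same route as the paper, which does not prove this theorem itself but simply invokes it as the Ljusternik--Schnirelmann result of \cite[Theorem 6.2]{jeanjean1992}: genus properties plus a $\mathbb{Z}_2$-equivariant deformation lemma on the constraint, followed by the standard three-step minimax bookkeeping. You also correctly supply the details the paper leaves implicit, namely that $J_{|X}$ is even because $G(-v)=-G(v)$ and $I$ is even, that the boundary of $X=X_a$ is never reached at levels below $\frac{1}{2}ma^2$ thanks to Lemma \ref{Lem2.5} and Remark \ref{X} (the flow decreases $J$, so trajectories stay in the region controlled there), and that $K_b$ is compact and avoids $0$ so the neighbourhood and subadditivity properties of the genus apply.
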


\begin{Lem}\label{Lem4.1}(\cite[Lemma 3.6]{Heinz1993})
    Let $Z$ be a finite dimensional, $Y$ an arbirtary normed space, and let $\{L_k\}$ be a sequence of linear maps $Z\rightarrow Y$ for which
    \begin{equation*}
        \lim_{k\to \infty}\|L_kz\|=\|z\|,~\forall z\in Z.
    \end{equation*}
    Then
    \begin{itemize}
        \item[$(i)$] the convergence $\|L_kz\|\rightarrow \|z\|$ is uniform on bounded subsets of $Z$;
        \item[$(ii)$] there is $k_0\in \mathbb{N}$ such that $L_k$ is injective for $k\geq k_0$.
    \end{itemize}
\end{Lem}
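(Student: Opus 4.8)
The plan is to exploit the finite dimensionality of $Z$ in order to upgrade the pointwise convergence $\|L_kz\|\to\|z\|$ first to a uniform operator bound, then to uniform convergence on the (compact) unit sphere, and finally to read off injectivity. First I would show that the operator norms $\|L_k\|$ are uniformly bounded. Fix a basis $e_1,\dots,e_n$ of $Z$; for $z=\sum_{i=1}^n a_ie_i$ one has $\|L_kz\|\le\sum_{i=1}^n|a_i|\,\|L_ke_i\|$, and since $\|L_ke_i\|\to\|e_i\|$ each $\sup_k\|L_ke_i\|$ is finite. By equivalence of norms on the finite-dimensional space $Z$ this yields a constant $M$, independent of $k$, with $\|L_kz\|\le M\|z\|$ for all $z\in Z$ and all $k$.

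Next I would use this bound to obtain equicontinuity of the functions $\varphi_k(z):=\|L_kz\|$. By the reverse triangle inequality, $|\varphi_k(z)-\varphi_k(z')|\le\|L_k(z-z')\|\le M\|z-z'\|$, so $\{\varphi_k\}$ is uniformly $M$-Lipschitz, while the limit $\varphi(z):=\|z\|$ is $1$-Lipschitz. A standard $\varepsilon$-net argument (or Arzel\`a--Ascoli) on any compact $K\subset Z$ then upgrades pointwise to uniform convergence: cover $K$ by finitely many balls of radius $\delta$ centred at $z_1,\dots,z_N$, with $\delta$ chosen so that $(M+1)\delta<\varepsilon/3$; pick $k$ large enough that $|\varphi_k(z_j)-\varphi(z_j)|<\varepsilon/3$ for all $j$; and interpolate. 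Since in a finite-dimensional normed space every bounded subset has compact closure, this gives $(i)$.

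Finally, for $(ii)$ I would apply $(i)$ to the unit sphere $S_Z=\{z\in Z:\|z\|=1\}$, which is compact because $Z$ is finite-dimensional. Uniform convergence of $\|L_kz\|$ to $1$ on $S_Z$ furnishes a $k_0\in\mathbb{N}$ with $\|L_kz\|\ge\frac12$ for every $z\in S_Z$ and every $k\ge k_0$. By homogeneity, $\|L_kz\|\ge\frac12\|z\|$ for all $z\in Z$ and $k\ge k_0$, hence $\ker L_k=\{0\}$, i.e. $L_k$ is injective for $k\ge k_0$.

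The one point that deserves care — and the step I would single out as the crux — is the uniform operator bound $\sup_k\|L_k\|\le M$: without the finite dimensionality of $Z$, pointwise control of $\|L_kz\|$ says nothing about $\sup_k\|L_k\|$, and everything afterwards (equicontinuity, hence uniform convergence on compacta, hence eventual injectivity) rests on it. Once that bound is secured, the remainder is a routine compactness argument.
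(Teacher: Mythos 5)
Your proof is correct. Note that the paper itself gives no argument for this lemma: it is quoted verbatim with a citation to Heinz (Lemma 3.6 of \cite{Heinz1993}), so there is nothing in the text to compare against; your write-up is the standard self-contained proof of that cited result. The three steps are all sound: the uniform bound $\sup_k\|L_k\|\le M$ obtained from a basis together with equivalence of norms on the finite-dimensional space $Z$, the resulting uniform Lipschitz bound on $\varphi_k(z)=\|L_kz\|$ combined with an $\varepsilon$-net on a compact (closure of a bounded) set to get uniform convergence, and the uniform lower bound $\|L_kz\|\ge\tfrac12\|z\|$ for $k\ge k_0$ via compactness of the unit sphere and homogeneity, which gives trivial kernel and hence injectivity.
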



 Given $k \in \mathbb{N}$, we choose a $k$-dimensional subspace $Z$, which is spanned of the first $k$ Hermite functions in $\mathbb{R}^3$, and we endow $Z$ with the $L^2$-norm.\medskip

  Now, we recall the following important lemma proved in \cite[Lemma 2.2]{DingDirac}.
\begin{Lem}\label{spe}
For any $\lambda\in \sigma(H_0)$, there exists a non-trivial
periodic solution $\Phi_{\lambda}\in
\mathcal{C}^{\infty}(\mathbb{R}^{3},\mathbb{C}^{4})$ of the following equation
  $$-i\sum\limits_{k=1}^3\alpha_k\partial_k u+m\beta u=\lambda u.$$
\end{Lem}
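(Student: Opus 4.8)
The plan is to produce $\Phi_\lambda$ explicitly as a plane wave and reduce the whole statement to a finite-dimensional eigenvalue computation. Since $\sigma(H_0)=\mathbb{R}\setminus(-m,m)$ we have $|\lambda|\ge m$. I would look for a solution of the form $\Phi_\lambda(x)=e^{i\xi\cdot x}\phi$ with a frequency vector $\xi\in\mathbb{R}^3$ to be chosen and a constant spinor $\phi\in\mathbb{C}^4\setminus\{0\}$. Because $-i\partial_k e^{i\xi\cdot x}=\xi_k e^{i\xi\cdot x}$, one gets
\[
-i\sum_{k=1}^3\alpha_k\partial_k\Phi_\lambda+m\beta\Phi_\lambda=e^{i\xi\cdot x}\big(A(\xi)\phi\big),\qquad A(\xi):=\sum_{k=1}^3\xi_k\alpha_k+m\beta,
\]
so $\Phi_\lambda$ solves the equation precisely when $\phi$ is an eigenvector of the Hermitian $4\times4$ matrix $A(\xi)$ associated with the eigenvalue $\lambda$.

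Next I would determine the spectrum of $A(\xi)$ using only the algebraic relations satisfied by the Pauli--Dirac matrices, namely $\alpha_j\alpha_k+\alpha_k\alpha_j=2\delta_{jk}I$, $\alpha_j\beta+\beta\alpha_j=0$ and $\beta^2=I$, all immediate from the block form recalled in Section~1. Squaring gives
\[
A(\xi)^2=\sum_{j,k}\xi_j\xi_k\alpha_j\alpha_k+m\sum_k\xi_k(\alpha_k\beta+\beta\alpha_k)+m^2\beta^2=\big(|\xi|^2+m^2\big)I,
\]
so $A(\xi)$ has only the two eigenvalues $\pm\sqrt{|\xi|^2+m^2}$, and since $\operatorname{tr}A(\xi)=0$ each occurs with multiplicity two. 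Given $\lambda$ with $|\lambda|\ge m$, I choose $\xi\in\mathbb{R}^3$ with $|\xi|^2=\lambda^2-m^2$ (so $\xi=0$ when $|\lambda|=m$); then $\lambda\in\{\pm\sqrt{|\xi|^2+m^2}\}=\sigma(A(\xi))$, and I pick a unit eigenvector $\phi$ of $A(\xi)$ with $A(\xi)\phi=\lambda\phi$.

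Finally, set $\Phi_\lambda(x)=e^{i\xi\cdot x}\phi$. By construction it solves $-i\sum_k\alpha_k\partial_k u+m\beta u=\lambda u$, it is of class $\mathcal{C}^\infty$, it is non-trivial since $|\Phi_\lambda(x)|\equiv|\phi|=1$, and it is bounded with constant modulus; it is also periodic, being constant when $\xi=0$ and, when $\xi\ne0$, invariant under every translation in the plane $\xi^\perp$ and under translation by $2\pi\xi/|\xi|^2$, hence periodic with respect to a lattice depending on $\lambda$ (if one insists on $(2\pi\mathbb{Z})^3$-periodicity one restricts $\lambda$ so that $\lambda^2-m^2\in 4\pi^2\mathbb{Z}$ and takes $\xi\in2\pi\mathbb{Z}^3$, but this is not needed for the cut-off construction in which $\Phi_\lambda$ is used). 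The argument is essentially that of \cite[Lemma~2.2]{DingDirac}. There is no genuine difficulty here: the only step that requires a little care is the identity $A(\xi)^2=(|\xi|^2+m^2)I$, which is exactly what turns the eigenvalue problem for a $4\times4$ matrix into the scalar arithmetic $\lambda^2=|\xi|^2+m^2$; everything else is a direct verification, the mild conceptual point being simply the sense in which a plane wave is ``periodic'' (and that $H_0$ has no honest $L^2$-eigenfunctions, so plane waves are the natural substitute).
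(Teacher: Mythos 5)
Your construction is correct: the anticommutation identities give $A(\xi)^2=(|\xi|^2+m^2)I$, so choosing $|\xi|^2=\lambda^2-m^2$ (possible exactly because $|\lambda|\geq m$ on $\sigma(H_0)$) and an eigenvector $\phi$ with $A(\xi)\phi=\lambda\phi$ yields the smooth, non-trivial, periodic plane-wave solution $e^{i\xi\cdot x}\phi$, and your handling of the degenerate case $\xi=0$ and of the sense of periodicity is adequate for how $\Phi_m$ is used later. The paper itself offers no proof, simply recalling the statement from \cite{DingDirac}, and your argument is the standard plane-wave reduction that underlies that cited lemma, so there is nothing to correct.
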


Next, we define the linear maps $L_k: Z \rightarrow
H^{1}(\mathbb{R}^{3},\mathbb{C}^{4})$ by
 \begin{equation}\label{eq4.1}
   \left(L_k \zeta\right)(x):=\frac{1}{k^{3/2}\sqrt{M\left(\left|\Phi_{m}\right|^{2}\right)}}\Phi_m(x) \zeta\left(\frac{x}{k}\right)
 \end{equation}
 for $\zeta \in Z, k \in \mathbb{N}, x \in \mathbb{R}^3$, while $\Phi_m$ is defined in Lemma \ref{spe} and $M(f)$ denotes the mean-value of a uniformly almost periodic function
 $f:\mathbb{R}^{3}\rightarrow \mathbb{R}$, which is defined by
$$M(f)=\displaystyle \lim_{T\rightarrow \infty}\frac{1}{T^{3}}\int_{0}^{T}\int_{0}^{T}\int_{0}^{T}f(x)dx.$$
  In what follows, we denote $Z^1:=\{\zeta \in Z \mid\|\zeta\|_{L^2}=1\}$.

\begin{Lem}\label{Lem4.2}
    Assume that $(f_1)$-$(f_5)$ hold and let $\{L_k\}$ be defined by \eqref{eq4.1}. Then, there exist constants $C_0>0$ and $k_0\in \mathbb{N}$ such that
    \begin{equation*}
        \Psi\left(L_k \zeta\right) \geq C_0 k^{-\frac{3(\alpha-2)}{2}-\tau},~~\forall k>k_0~~\text{and}~~\zeta\in Z^1.
    \end{equation*}
\end{Lem}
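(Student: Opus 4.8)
The plan is to estimate $\Psi(L_k\zeta)$ from below using assumption $(f_5)$, which gives a pointwise lower bound $F(x,t)\geq L|x|^{-\tau}t^\alpha$ on the cone $S$ for small $t$. First I would compute $|L_k\zeta(x)|$ explicitly from \eqref{eq4.1}: it equals $k^{-3/2}(M(|\Phi_m|^2))^{-1/2}|\Phi_m(x)|\,|\zeta(x/k)|$. Since $\zeta$ ranges over the finite-dimensional set $Z^1$ (spanned by the first $k$ Hermite functions, normalized in $L^2$), both $\|\zeta\|_\infty$ and the geometry of where $|\zeta|$ is bounded below are controlled uniformly; moreover $\Phi_m$ is a fixed smooth almost periodic function bounded above and below in mean. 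The key point is that $|L_k\zeta(x)|\to 0$ uniformly as $k\to\infty$ (because of the $k^{-3/2}$ prefactor and boundedness of the other factors), so for $k$ large we are in the regime $0\leq |L_k\zeta|\leq t_0$ where $(f_5)$ applies.

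Next I would localize the integral $\Psi(L_k\zeta)=\int_{\mathbb{R}^3}F(x,|L_k\zeta(x)|)\,dx$ to the region where the cone $S$ and the "dilated support" of $\zeta(\cdot/k)$ overlap. After the change of variables $y=x/k$, the dilated cone $S$ becomes essentially $S$ again (cones are scale invariant: $tx$ with $t\geq 1$), so the relevant region in $y$-coordinates is $S\cap(\text{where }|\zeta(y)|\text{ is appreciable})$. On this region $|x|=k|y|$ gives the factor $|x|^{-\tau}=k^{-\tau}|y|^{-\tau}$, while $t^\alpha=|L_k\zeta|^\alpha$ contributes $k^{-3\alpha/2}$ from the prefactor together with $|\Phi_m(x)|^\alpha|\zeta(y)|^\alpha$. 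The Jacobian of $x=ky$ gives another $k^3$. Collecting powers of $k$: $k^3\cdot k^{-\tau}\cdot k^{-3\alpha/2}=k^{3-3\alpha/2-\tau}=k^{-3(\alpha-2)/2-\tau}$, which is exactly the claimed exponent. The remaining factor $\int_{S\cap\{|y|\leq R\}}|\Phi_m(ky)|^\alpha|y|^{-\tau}|\zeta(y)|^\alpha\,dy$ must be bounded below by a positive constant uniform in $\zeta\in Z^1$ and $k$ large.

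This last uniform lower bound is the main obstacle, and here is how I would handle it. The factor $|\Phi_m(ky)|^\alpha$ oscillates; to control $\int |\Phi_m(ky)|^\alpha g(y)\,dy$ for a fixed nonnegative $g$, I would invoke the mean-value property of almost periodic functions — as $k\to\infty$, $\int |\Phi_m(ky)|^\alpha g(y)\,dy\to M(|\Phi_m|^\alpha)\int g(y)\,dy$, and $M(|\Phi_m|^\alpha)>0$ since $\Phi_m$ is nontrivial and continuous. For the $\zeta$-dependence, I would use that $Z^1$ is the unit sphere of a finite-dimensional space, hence compact, so the functional $\zeta\mapsto\int_{S\cap B_R(0)}|y|^{-\tau}|\zeta(y)|^\alpha\,dy$ (continuous and strictly positive on $Z^1$, using that no nonzero Hermite combination vanishes on the open set $S\cap B_R$, and that $|y|^{-\tau}$ is locally integrable since $\tau<(8-3\alpha)/2<4$... actually $\tau<3$ suffices) attains a positive minimum $c_0>0$. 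Combining, for $k\geq k_0$ the whole integral is at least $\tfrac{1}{2}M(|\Phi_m|^\alpha)c_0(M(|\Phi_m|^2))^{-\alpha/2}L$, giving the constant $C_0$. A technical care point: one must choose the truncation radius $R$ and threshold so that $|L_k\zeta|\leq t_0$ holds on the relevant region for all $k\geq k_0$, which is automatic from the uniform decay established in the first step; and one should double-check that the portion of $S$ used lies where $\zeta(\cdot/k)$ is not forced small — but since $S$ is a fixed cone with nonempty interior and $Z$ consists of Hermite functions which are real-analytic and not identically zero on any open set, this causes no difficulty.
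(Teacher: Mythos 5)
Your proposal is correct and follows essentially the same route as the paper: the uniform bound $|L_k\zeta|\leq c_1k^{-3/2}\leq t_0$ to enter the regime of $(f_5)$, the change of variables $y=x/k$ with $S\subset S_k$ and the power count $k^{3-3\alpha/2-\tau}$, and the mean-value property of the almost periodic $\Phi_m$ (the paper's citation of Heinz--K\"upper--Stuart) to produce the factor $M(|\Phi_m|^\alpha)$. The only difference is cosmetic: where you invoke compactness of $Z^1$ (plus real-analyticity of Hermite combinations) to get a positive lower bound uniform in $\zeta$, the paper packages the same finite-dimensional uniformity through Lemma \ref{Lem4.1} applied to the linear maps $\Lambda_k\zeta=\Phi_m(k\cdot)\zeta$ into $L^\alpha(S;|y|^{-\tau}dy)$ and the norm equivalence $\|\zeta\|_*\geq c_2$ on $Z^1$, which makes the uniformity of the $k\to\infty$ limit over $Z^1$ explicit rather than implicit in your compactness remark.
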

\begin{proof}
Since $\Phi_m$ is bounded and the sup-norm on $Z$ is equivalent to
the $L^2$-norm, there is $c_1>0$ such that
\begin{equation*}
\left|\left(L_k \zeta\right)(x)\right| \leq c_1
k^{-3/2}~~\text{for}~~k \in \mathbb{N}, ~~\zeta \in Z^1,~~ x \in
\mathbb{R}^3.
\end{equation*}
 Hence, there is $k_0$ such that
 \begin{equation*}
\left\|L_k \zeta\right\|_{\infty} \leq t_0~~\text{for}~~k \geq k_0
~~ \mbox{and}~~ \zeta \in Z^1,
 \end{equation*}
where $t_0$ is defined in $(f_5)$. By the definition of
$L_k$ and $(f_5)$, we have
    $$
    \begin{aligned}
        \Psi(L_k \zeta) & =\int_{ \mathbb{R}^3}F\left(x,|L_k \zeta|\right)dx\\
        & \geq \frac{Lk^{-\frac{3\alpha}{2}}}{
            M^{\frac{\alpha}{2}}\left(|\Phi_m|^2\right)}\int_S |x|^{-\tau}\left|  \Phi_m(x) \zeta\left(\frac{x}{k}\right)\right|^{\alpha}dx \\
        & =\frac{Lk^{-\frac{3\alpha}{2}+3-\tau}}{
            M^{\frac{\alpha}{2}}\left(|\Phi_m|^2\right)} \int_{S_k}|y|^{-\tau}|\Phi_m(k y)|^\alpha|\zeta(y)|^\alpha dy,
    \end{aligned}
    $$
    where $S_k:=\{y\in \mathbb{R}^3|ky\in S\}$. Note that, for $k\geq 1$, $S\subset S_k$. It follows, in the light of \cite[proposition 3.1]{Heinz-Kupper-Stuart1992JDE}, that
\begin{align}\label{eq4.2}
\frac{M^{\frac{\alpha}{2}}\left(|\Phi_m|^2\right)}{
    L}
k^{\frac{3(\alpha-2)}{2}+\tau} \Psi(L_k \zeta)&\geq
\int_{\mathbb{R}^3}\left|\Phi_m(k y)\right|^\alpha
\chi_S(y)|y|^{-\tau} |\zeta(y)|^\alpha dy\\ &\to
M\left(|\Phi_m|^\alpha\right) \int_{\mathbb{R}^3} g(y) dy,\nonumber
\end{align}
where
    $$
g(y):=\chi_S(y)|y|^{-\tau}|\zeta(y)|^\alpha,
$$
and $\chi_S$ is the characteristic function of the set $S$.

\medskip

Now apply Lemma \ref{Lem4.1} to the linear maps $\Lambda_k: Z \rightarrow L^\alpha\left(S ;|y|^{-\tau} \mathrm{d} y\right)$ given by
    $$
    \left(\Lambda_k \zeta\right)(x):=\Phi_m(k x) \zeta(x) \quad(x \in S)
    $$
    where the norm on $Z$ is now given by
    $$
    \|\zeta\|_*:=\left(M\left(|\Phi_m|^{\alpha}\right) \int_S|y|^{-\tau}|\zeta(y)|^\alpha dy\right)^{\frac{1}{\alpha}} .
    $$
    Note that, every $\zeta \in Z$ is real analytic, we may assume that for any $\zeta\in Z^1$
    \begin{equation*}
\zeta(x)\neq 0~\text{a.e.}~\text{on}~S.
    \end{equation*}
Therefore, it is easy to see that $\|\cdot\|_*$ is indeed a norm and
then by the equivalence of the norms on $Z$, there exists a constant $c_2>0$
such that
    \begin{equation}\label{eq44.3}
\|\zeta\|_*\geq c_2,~~ \forall \zeta \in Z^1.
    \end{equation}
Thus, we obtain from Lemma \ref{Lem4.1}, \eqref{eq4.2} and
\eqref{eq44.3}
    $$      \Psi\left(L_k \zeta\right) \geq \frac{Lc_2^\alpha}{
        M^{\frac{\alpha}{2}}\left(|\Phi_m|^2\right)}k^{-\frac{3(\alpha-2)}{2}-\tau} =: C_0k^{-\frac{3(\alpha-2)}{2}-\tau}>0,
    $$
    for $k>k_0$, $\zeta\in Z^1$. This completes the proof.
\end{proof}

\begin{Lem}\label{Lem4.3}
    For any $k\in \mathbb{N}$ and let $\{L_k\}$ be defined by \eqref{eq4.1}, then
    \begin{itemize}
        \item[$(i)$] $\lim\limits _{k \rightarrow \infty}\left\|L_k \zeta\right\|_{L^2}=\|\zeta\|_{L^2}$ for all $\zeta \in Z$;
        \item[$(ii)$]  there exists constant $C$ such that, for $k$ large enough
        \begin{equation*}
        0<\Psi(L_k \zeta) \leq C~ \text { for all } \zeta \in Z^1;
        \end{equation*}
        \item[$(iii)$]
        \begin{equation*}
            \frac{\sup\limits_{\zeta\in Z^1}
                 \left|((H_0-m) L_k \zeta, L_k \zeta)_{L^2}\right|}
            {\inf\limits_{\zeta\in Z^1} \Psi\left(L_k \zeta\right)} \rightarrow 0 \text { as } k \rightarrow \infty,
        \end{equation*}
    and
            \begin{equation*}
        \frac{\sup\limits_{\zeta\in Z^1}\|(H_0-m) L_k \zeta\|_{L^2}}
        {\inf\limits_{\zeta\in Z^1} \Psi\left(L_k \zeta\right)} \rightarrow 0 \text { as } k \rightarrow \infty .
    \end{equation*}
    \end{itemize}
\end{Lem}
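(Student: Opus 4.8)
The plan is to verify the three items in turn, the common tools being the explicit form \eqref{eq4.1} of $L_k$, the smoothness and boundedness of the periodic function $\Phi_m$ from Lemma \ref{spe}, the mean-value property of $\Phi_m$ and its powers in the form of \cite[Proposition 3.1]{Heinz-Kupper-Stuart1992JDE} (already used in the proof of Lemma \ref{Lem4.2}), and the equivalence of all norms on the finite-dimensional space $Z$, which turns pointwise-in-$\zeta$ bounds into bounds uniform over $Z^1$. Write $c_k:=\big(k^{3/2}\sqrt{M(|\Phi_m|^2)}\big)^{-1}$, so $(L_k\zeta)(x)=c_k\,\Phi_m(x)\,\zeta(x/k)$. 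The key computation for (iii) is: since $H_0\Phi_m=m\Phi_m$ (Lemma \ref{spe} with $\lambda=m\in\sigma(H_0)$) and $\zeta(\cdot/k)$ is scalar-valued, the Leibniz rule for $-i\alpha\cdot\nabla$ gives
\[
\big((H_0-m)L_k\zeta\big)(x)=-\frac{i\,c_k}{k}\sum_{j=1}^3\alpha_j\,\Phi_m(x)\,(\partial_j\zeta)(x/k),\qquad x\in\mathbb{R}^3 .
\]

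For (i), the change of variables $y=x/k$ gives $\|L_k\zeta\|_{L^2}^2=M(|\Phi_m|^2)^{-1}\int_{\mathbb{R}^3}|\Phi_m(ky)|^2|\zeta(y)|^2\,dy$; since $\zeta$ is a Hermite function, $|\zeta|^2\in L^1(\mathbb{R}^3)$, so the mean-value property yields that this tends to $\|\zeta\|_{L^2}^2$, i.e. $\|L_k\zeta\|_{L^2}\to\|\zeta\|_{L^2}$. The same substitution together with $\|\Phi_m\|_{\infty}<\infty$ also gives, for all $k$ and all $\zeta\in Z^1$, the uniform bound $\|L_k\zeta\|_{L^2}^2\le\|\Phi_m\|_{\infty}^2/M(|\Phi_m|^2)$. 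For (ii), the strict positivity $\Psi(L_k\zeta)>0$ (for $k>k_0$) is precisely Lemma \ref{Lem4.2}; for the upper bound, \eqref{eq1.5} gives $\Psi(L_k\zeta)\le C\int_{\mathbb{R}^3}(|L_k\zeta|^p+|L_k\zeta|^q)\,dx$, and for $s\in\{p,q\}$ the substitution $y=x/k$ yields $\int_{\mathbb{R}^3}|L_k\zeta|^s\,dx=k^{\,3-3s/2}M(|\Phi_m|^2)^{-s/2}\int_{\mathbb{R}^3}|\Phi_m(ky)|^s|\zeta(y)|^s\,dy\le C\,k^{\,3-3s/2}$ uniformly over $\zeta\in Z^1$ (using $\|\Phi_m\|_{\infty}<\infty$ and equivalence of norms on $Z$); since $p,q>2$ we get $\Psi(L_k\zeta)\to0$ uniformly on $Z^1$, in particular $\Psi(L_k\zeta)\le C$ for $k$ large.

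For (iii), from the displayed identity and the fact that each $\alpha_j$ is unitary,
\[
\|(H_0-m)L_k\zeta\|_{L^2}^2\le\frac{3c_k^2}{k^2}\int_{\mathbb{R}^3}|\Phi_m(x)|^2|\nabla\zeta(x/k)|^2\,dx=3c_k^2k\int_{\mathbb{R}^3}|\Phi_m(ky)|^2|\nabla\zeta(y)|^2\,dy\le\frac{3\|\Phi_m\|_{\infty}^2}{k^2M(|\Phi_m|^2)}\|\nabla\zeta\|_{L^2}^2 ,
\]
so $\sup_{\zeta\in Z^1}\|(H_0-m)L_k\zeta\|_{L^2}\le C/k$ (again by equivalence of norms on $Z$), and hence, using the bound from (i), also $\sup_{\zeta\in Z^1}|((H_0-m)L_k\zeta,L_k\zeta)_{L^2}|\le C/k$. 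On the other hand Lemma \ref{Lem4.2} gives $\inf_{\zeta\in Z^1}\Psi(L_k\zeta)\ge C_0\,k^{-3(\alpha-2)/2-\tau}$. Dividing, both quotients in (iii) are bounded by a constant multiple of $k^{\,-1+3(\alpha-2)/2+\tau}=k^{\,3\alpha/2-4+\tau}$, and since $(f_5)$ requires $\tau<(8-3\alpha)/2=4-3\alpha/2$ the exponent is strictly negative; therefore both quotients tend to $0$ as $k\to\infty$.

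The main point is not an analytic subtlety but bookkeeping of the powers of $k$: the dilation $\zeta(\cdot/k)$ contributes $k^{3}$ under the substitution, the normalization $c_k\sim k^{-3/2}$ contributes $k^{-3}$, and the gradient $\partial_j[\zeta(\cdot/k)]\sim k^{-1}$ contributes the extra $k^{-2}$, so one must check that the surviving exponent $3\alpha/2-4+\tau$ in (iii) is made negative precisely by the admissible range of $\tau$ in $(f_5)$ — this is the only place where that hypothesis enters. A secondary point is to make sure all the mean-value limits are uniform over the unit sphere $Z^1$; this follows from the uniform pointwise bounds above together with the equivalence of norms on the finite-dimensional $Z$ (alternatively, from Lemma \ref{Lem4.1}).
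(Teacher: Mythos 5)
Your proposal is correct and follows essentially the same route as the paper: the same key identity $(H_0-m)L_k\zeta=-\tfrac{i\,c_k}{k}\sum_{j=1}^3\alpha_j\,\Phi_m\,(\partial_j\zeta)(\cdot/k)$ coming from $(H_0-m)\Phi_m=0$, the same scaling/mean-value estimates giving the $O(1/k)$ bounds in (iii), and the same division by the lower bound of Lemma \ref{Lem4.2} with the exponent made negative exactly by $\tau<(8-3\alpha)/2$. The only (minor) deviation is in part (ii), where you bound $\Psi(L_k\zeta)$ directly from \eqref{eq1.5} by a change of variables (even getting uniform decay), whereas the paper instead shows $L_k(Z^1)$ is uniformly bounded in $H^1$ and invokes the Sobolev embedding; both arguments are fine.
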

\begin{proof}
$(i)$ Clearly, it follows from \cite[proposition
3.1]{Heinz-Kupper-Stuart1992JDE} that
\begin{equation*}
    \int_{\mathbb{R}^3}|L_k \zeta|^2dx=\frac{1}{M\left(|\Phi_m|^2\right)}\int_{\mathbb{R}^3}\zeta^2(x)|\Phi_m(kx)|^2dx\rightarrow \int_{\mathbb{R}^3}|\zeta|^2dx~\text{as}~k\to \infty.
\end{equation*}
$(ii)$ In view of Lemma \ref{Lem4.2}, we get $\Psi(L_k \zeta)>0$ for all $\zeta\in Z^1$. Thus, it suffices to show that $L_k(Z^1)$ is uniformly bounded in
$H^{1/2}(\mathbb{R}^3,\mathbb{C}^4)$. For any $\zeta \in
Z^1$, we have
\begin{equation}\label{eq4.3}
    \begin{split}
    \int_{\mathbb{R}^3}|\nabla \left(L_k \zeta\right)|^2dx&\leq \frac{2}{M(|\Phi_m|^2)k^2}\int_{\mathbb{R}^3}|\nabla\zeta(x)|^2|\Phi_m(kx)|^2dx\\
    &\quad+\frac{2}{M(|\Phi_m|^2)}\int_{\mathbb{R}^3}|\zeta(x)|^2|\nabla \Phi_m(kx)|^2dx.
    \end{split}
\end{equation}
By \cite[proposition 3.1]{Heinz-Kupper-Stuart1992JDE} again, there
holds
\begin{equation}\label{eq4.4}
\int_{\mathbb{R}^3}|\nabla \zeta(x)|^2|\Phi_m(kx)|^2dx\to
M(|\Phi_m|^2)\int_{\mathbb{R}^3}|\nabla \zeta|^2dx
\end{equation}
and
\begin{equation}\label{eq4.5}
    \int_{\mathbb{R}^3}|\zeta(x)|^2|\nabla \Phi_m(kx)|^2dx\rightarrow M(|\nabla \Phi_m|^2)\int_{\mathbb{R}^3}|\zeta|^2dx=M(|\nabla \Phi_m|^2).
\end{equation}
Since all norms on $Z$ are equivalent, there exists constant $c_3>0$
such that
\begin{equation}\label{eq4.7}
\int_{\mathbb{R}^3}|\nabla\zeta|^2dx\leq c_3.
\end{equation}
Note that,
\begin{equation*}
    \int_{\mathbb{R}^3}|L_k \zeta|^2dx\leq \|L_k \zeta\|^2\leq \|L_k \zeta\|_{H^1}^2=\int_{\mathbb{R}^3}|L_k \zeta|^2dx+\int_{\mathbb{R}^3}|\nabla (L_k \zeta)|^2dx.
\end{equation*}
Therefore, this together with \eqref{eq4.4}-\eqref{eq4.7}, there exist
constants $A,B>0$ such that, for $k$ large enough
\begin{equation*}
    0<A\leq \|L_k \zeta\|\leq B, \ \ \forall \zeta\in Z^1.
\end{equation*}
$(iii)$ For any $\zeta \in Z^1$, by a direct computation, we
obtain
\begin{equation*}
    (H_0 -mI)L_k \zeta(x)=\frac{-ik^{-\frac{5}{2}}}{M^{\frac{1}{2}}\left(|\Phi_m|^2\right)}\sum\limits_{j=1}^3\frac{\partial \zeta}{\partial x_j}(\frac{x}{k})\alpha_j \Phi_m(x)
\end{equation*}
since $(H_0 -mI)\Phi_m(x)=0$. Hence, it yields
\begin{equation}\label{eq4.8}
    ((H_0-mI)L_k \zeta,L_k \zeta)_{L^2}=\frac{-ik^{-4}}{M\left(|\Phi_m|^2\right)}\int_{\mathbb{R}^3}\sum_{j=1}^3\frac{\partial \zeta}{\partial x_j}(\frac{x}{k})\alpha_j \Phi_m(x)\zeta(\frac{x}{k})\Phi_m(x)dx.
\end{equation}
Using \eqref{eq4.8} and the Cauchy inequality, one has
\begin{equation}\label{eq4.9}
    \begin{split}
        &\quad \quad k|((H_0-mI)L_k \zeta,L_k \zeta)_{L^2}|\\
        &=\frac{k^{-3}}{M\left(|\Phi_m|^2\right)}\bigg|\int_{\mathbb{R}^3}\sum_{j=1}^3\frac{\partial \zeta}{\partial x_j}(\frac{x}{k})
        \alpha_j\Phi_m(x)\zeta(\frac{x}{k})\Phi_m(x)dx\bigg|\\
        &\leq \frac{1}{M\left(|\Phi_m|^2\right)}\int_{\mathbb{R}^3}|\sum_{j=1}^3\frac{\partial \zeta}{\partial x_j}(x)|\zeta(x)|\Phi_m(kx)|^2dx\\
        &\leq \frac{\sqrt{3}}{M\left(|\Phi_m|^2\right)}\Big[\int_{\mathbb{R}^3}|\nabla \zeta(x)|^2|\Phi_m(kx)|^2dx\Big]^{\frac{1}{2}}\Big[\int_{\mathbb{R}^3}|\zeta(x)|^2|\Phi_m(kx)|^2dx\Big]^{\frac{1}{2}}\\
        &\to \sqrt{3}\Big[\int_{\mathbb{R}^3}|\nabla \zeta|^2dx\Big]^{\frac{1}{2}}~\text{as}~k\to \infty
     \end{split}
\end{equation}
and
\begin{equation}\label{eq4.10}
    \begin{split}
        k^2\|(H_0-mI)L_k \zeta\|_{L^2}^2&=\frac{k^{-3}}{M\left(|\Phi_m|^2\right)}\int_{\mathbb{R}^3}|\sum_{j=1}^3\frac{\partial \zeta}{\partial x_j}(\frac{x}{k})|^2|\Phi_m(x)|^2dx\\
        &\leq \frac{3}{M\left(|\Phi_m|^2\right)}\int_{\mathbb{R}^3}|\nabla \zeta(x)|^2|\Phi_m(kx)|^2dx\\
        &\to 3\int_{\mathbb{R}^3}|\nabla \zeta|^2dx~\text{as}~k\to \infty.
    \end{split}
\end{equation}
Thus, from \eqref{eq4.7} and \eqref{eq4.9}-\eqref{eq4.10}, there
exists positive constant $C$ independent of $\zeta$ such that
\begin{equation}\label{eq2.10}
    |((H_0-mI)L_k \zeta,L_k \zeta)_{L^2}|\leq \frac{C}{k}
\end{equation}
and
\begin{equation}\label{eq2.11}
    \|(H_0-mI)L_k \zeta\|_{L^2}\leq \frac{C}{k}.
\end{equation}
It follows, from Lemma \ref{Lem4.2}, \eqref{eq2.10}-\eqref{eq2.11},
that
\begin{equation*}
    \frac{\sup\limits_{\zeta\in Z^1}\|(H_0-mI)L_k \zeta\|_{L^2}}{\inf\limits_{\zeta\in Z^1}\Psi(L_k \zeta)}\leq Ck^{\frac{3(\alpha-2)}{2}+\tau-1}\to 0
\end{equation*}
and
\begin{equation*}
    \frac{\sup\limits_{\zeta\in Z^1}\left|((H_0-mI)L_k \zeta,L_k \zeta)_{L^2}\right|}{\inf\limits_{\zeta\in Z^1}\Psi(L_k \zeta)}\leq Ck^{\frac{3(\alpha-2)}{2}+\tau-1}\to 0
\end{equation*}
as $k\to \infty$ since $\tau\in (0,\frac{8-3\alpha}{2})$, which
completes the proof of Lemma \ref{Lem4.3}.
\end{proof}
\begin{Lem}\label{Lem4.4}
Suppose that $(f_1)$-$(f_5)$ are satisfied. For any $k\in \mathbb{N}$,
there exists a sequence $\left\{X_n^k\right\}_{n=1}^\infty$ of
$k$-dimensional linear subspaces of $E^+$ such that for the sets
    $$
S_n^k:=\left\{u \in X_n^k \mid\|u\|_{L^2}=1\right\}
    $$
    we have
    $$
\frac{\sup\limits_{v\in S_n^k}
\left(\|v\|^2-m\right)}{\inf\limits_{v\in S_n^k}\Psi(v)} \rightarrow
0 \quad \text { as } n \rightarrow \infty .
    $$
\end{Lem}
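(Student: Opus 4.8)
The plan is to transport the $k$-dimensional spaces $L_n(Z)$ from Lemmas~\ref{Lem4.2}--\ref{Lem4.3} — which sit inside $H^1\subset E$ but not inside $E^+$ — into $E^+$ by projecting onto the positive spectral subspace, and then to check that this projection changes neither the quadratic form $u\mapsto\|u\|^2-m\|u\|_{L^2}^2$ nor $\Psi$ by more than the lower bound of Lemma~\ref{Lem4.2} can absorb. Concretely, fix $k\in\mathbb{N}$, keep $Z$ as the fixed $k$-dimensional Hermite subspace, and for $n\in\mathbb{N}$ let $L_n\colon Z\to H^1(\mathbb{R}^3,\mathbb{C}^4)$ be given by \eqref{eq4.1} with the scaling parameter $n$ in place of $k$, so that the family $\{L_n\}$ enjoys the conclusions of Lemmas~\ref{Lem4.1}--\ref{Lem4.3} together with the bounds \eqref{eq2.10}--\eqref{eq2.11}, with $n$ in the role of $k$. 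Let $P^\pm$ be the orthogonal projections of $E$ (equivalently of $L^2$) onto $E^\pm$, which commute with $H_0$ and $|H_0|$; put $v_\zeta:=P^+L_n\zeta$, $w_\zeta:=P^-L_n\zeta$, and
\[
X_n^k:=P^+\big(L_n(Z)\big)\subset E^+ .
\]

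The first ingredient is a spectral-gap estimate on the negative part. On $E^-$ one has $H_0-m=-(|H_0|+m)$ with $|H_0|\ge m$, so $(|H_0|+m)^2\ge4m^2$ and $(|H_0|+m)^2\ge4m|H_0|$ give $2m\,\|w\|_{L^2}\le\|(H_0-m)w\|_{L^2}$ and $2\sqrt m\,\|w\|\le\|(H_0-m)w\|_{L^2}$ for all $w\in E^-$. Since $P^-$ commutes with $H_0$ we have $\|(H_0-m)w_\zeta\|_{L^2}\le\|(H_0-m)L_n\zeta\|_{L^2}$, so \eqref{eq2.11} yields $\|w_\zeta\|_{L^2}+\|w_\zeta\|\le C/n$ uniformly for $\zeta\in Z^1$; combined with Lemma~\ref{Lem4.3}$(i)$ and the uniformity of that limit provided by Lemma~\ref{Lem4.1}$(i)$, this gives $\|v_\zeta\|_{L^2}\to1$ uniformly on $Z^1$. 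Hence for $n$ large $P^+L_n$ is injective on $Z$, so $X_n^k$ is a genuinely $k$-dimensional subspace of $E^+$ and $S_n^k=\{v_\zeta/\|v_\zeta\|_{L^2}\mid\zeta\in Z^1\}$. For the numerator, every $v\in S_n^k$ lies in $E^+$ with $\|v\|_{L^2}=1$, hence $\|v\|^2-m=\big((H_0-m)v,v\big)_{L^2}\ge0$ by \eqref{eq2.1}; writing $v=v_\zeta/\|v_\zeta\|_{L^2}$ and using that the $L^2$ cross-terms between $E^+$ and $E^-$ vanish,
\[
\big((H_0-m)v_\zeta,v_\zeta\big)_{L^2}=\big((H_0-m)L_n\zeta,L_n\zeta\big)_{L^2}-\big((H_0-m)w_\zeta,w_\zeta\big)_{L^2},
\]
where the first term is $O(1/n)$ by \eqref{eq2.10} and the second is $O(1/n^2)$ by \eqref{eq2.11} and the previous step, uniformly in $\zeta$; since $\|v_\zeta\|_{L^2}\to1$ this gives $\sup_{v\in S_n^k}(\|v\|^2-m)\le C_1/n$.

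For the denominator, the idea is to compare $\Psi(v_\zeta)$ with $\Psi(L_n\zeta)$. Writing $L_n\zeta=v_\zeta+w_\zeta$, the mean value theorem together with the growth bound $|\Psi'(u)h|\le C(\|u\|^{p-1}+\|u\|^{q-1})\|h\|$ (which follows from \eqref{eq1.2} and \eqref{eq1.5}) and the uniform bound $\|L_n\zeta\|\le B$ of Lemma~\ref{Lem4.3}$(ii)$ give $|\Psi(L_n\zeta)-\Psi(v_\zeta)|\le C_2/n$. Now $\tau<\frac{8-3\alpha}{2}$ is exactly the statement $\frac{3(\alpha-2)}{2}+\tau<1$, so the lower bound $\Psi(L_n\zeta)\ge C_0\,n^{-\frac{3(\alpha-2)}{2}-\tau}$ of Lemma~\ref{Lem4.2} dominates the error $C_2/n$, whence $\Psi(v_\zeta)\ge\tfrac{C_0}{2}\,n^{-\frac{3(\alpha-2)}{2}-\tau}$ for $n$ large; renormalizing by $s=\|v_\zeta\|_{L^2}^{-1}\in[\tfrac12,2]$ and applying \eqref{eq1.3} pointwise ($F(x,s\,|v_\zeta|)\ge 2^{-q}F(x,|v_\zeta|)$) then gives $\inf_{v\in S_n^k}\Psi(v)\ge C_3\,n^{-\frac{3(\alpha-2)}{2}-\tau}$. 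Combining the two estimates,
\[
\frac{\sup_{v\in S_n^k}\big(\|v\|^2-m\big)}{\inf_{v\in S_n^k}\Psi(v)}\le\frac{C_1}{C_3}\,n^{\,\tau-\frac{8-3\alpha}{2}}\longrightarrow0\qquad(n\to\infty),
\]
since $\tau<\frac{8-3\alpha}{2}$, which would give the claim.

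The step I expect to be the real obstacle is the denominator estimate: the lower bound of Lemma~\ref{Lem4.2} is available only for vectors of the form $L_n\zeta$, not for their positive parts $v_\zeta$, so everything hinges on the projection error $\|w_\zeta\|$ decaying fast enough ($O(1/n)$) to be absorbed by $\Psi(L_n\zeta)\gtrsim n^{-\frac{3(\alpha-2)}{2}-\tau}$ — and it is precisely the margin $\tau<(8-3\alpha)/2$ that makes $O(1/n)$ negligible against that bound. The remaining pieces — the spectral-gap inequalities on $E^-$, the uniform convergence from Lemma~\ref{Lem4.1}, and the growth/convexity facts collected in Remark~\ref{Rek1.1} — are routine.
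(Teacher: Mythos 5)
Your proposal is correct and follows essentially the same route as the paper's proof: you take $X_n^k=P^+\big(L_n(Z)\big)$, show the negative component of $L_n\zeta$ decays at rate $O(1/n)$ via \eqref{eq2.11}, split the quadratic form orthogonally for the numerator, and compare $\Psi$ at the projected point with $\Psi(L_n\zeta)$ for the denominator, exploiting the margin $\tfrac{3(\alpha-2)}{2}+\tau<1$ from $(f_5)$. The only minor differences are technical: the paper controls $\|u_n^-\|$ by Cauchy--Schwarz against $(H_0-m)u_n$ rather than your spectral-gap inequalities, and it bounds the denominator via convexity of $\Psi$ and \eqref{eq1.3} (namely $\Psi(u_n^+)\ge 2^{1-q}\Psi(u_n)-\Psi(u_n^-)$ with $\Psi(u_n^-)\le C\|u_n^-\|^p$) instead of your mean-value estimate, but both arguments rest on exactly the same mechanism.
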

\begin{proof}
    Fix $\zeta \in Z^1:=\left\{z \in Z\mid\|z\|_{L^2}=1\}\right.$ and let $u_n:=L_n\zeta$. Then, from
    \eqref{eq2.11}, one has
            \begin{equation*}
        \|(H_0-mI) u_n\|_{L^2}\to 0~\text{as}~n\to \infty.
    \end{equation*}
Thus, by Cauchy inequality, one has
            \begin{equation*}
    \|(H_0-mI) u_n\|_{L^2}\|u_n^-\|_{L^2}\geq |((H_0-mI) u_n,u_n^-)_{L^2}|=\|u_n^-\|^2+m\|u_n^-\|_{L^2}^2
\end{equation*}
which implies that
            \begin{equation}\label{eq04.13}
    \|u_n^-\|_{L^2}\to 0~\text{and}~\|u_n^-\|\to 0~\text{as}~n\to \infty,
\end{equation}
and
            \begin{equation}\label{eq4.13}
\|u_n^-\|_{L^2}\leq \|(H_0-mI) u_n\|_{L^2}.
\end{equation}
Note that, from Lemma \ref{Lem4.3}, we have $\|u_n\|_{L^2}=\|L_n
\zeta\|_{L^2}\to \|\zeta\|_{L^2}=1$ as $n\to \infty$. Hence, by
\eqref{eq04.13}, we get
            \begin{equation}\label{eq4.14}
    \|u_n^+\|_{L^2}\to 1~\text{as}~n\to \infty.
\end{equation}
Now we define the operators $L_n^+:Z\rightarrow
L^2(\mathbb{R}^3,\mathbb{C}^4)$ given by
\begin{equation*}
L_n^+ \zeta=(L_n \zeta)^+,~\zeta\in Z.
\end{equation*}
Thus, \eqref{eq4.14} implies that the operators $L_n^+$ satisfy the
assumption of Lemma \ref{Lem4.1}, so we may assume, without loss of
generality, that $L_n^+$ is injective for every $n$. Now set
    \begin{equation*}
        X_n^k:=L_n^+(Z).
    \end{equation*}
    Then $\dim X_n^k=k$. On the other hand, from \eqref{eq2.10}, one has
    \begin{equation*}
                |((H_0-mI)L_n \zeta,L_n \zeta)_{L^2}|\to 0~\text{as}~n\to \infty,
    \end{equation*}
that is,
    \begin{equation*}
\|u_n^+\|^2-\|u_n^-\|^2-m\|u_n\|_{L^2}\to 0~\text{as}~n\to \infty,
\end{equation*}
which implies that
            \begin{equation*}
\|u_n^+\|\to \sqrt{m}~\text{as}~n\to \infty,
\end{equation*}
     and so $X_n^k\subset V$ for $n$ large. Moreover,
        \begin{equation*}
S_n^k=\left\{\frac{L_n^+\zeta}{\|L_n^+\zeta\|_{L^2}}\bigg|\zeta\in
Z^1\right\}.
    \end{equation*}
We will prove that the sequence $S_n^k$ satisfies all properties
desired. Indeed, for any $v_n\in S_n^k$, there exists $\zeta \in
Z^1$ such that $v_n=\frac{L_n^+\zeta}{\|L_n^+\zeta\|_{L^2}}$. Let
$u_n=L_n\zeta$ again, and we may assume $\frac{1}{2}\leq \|u_n^+\|_{L^2}
\leq 1$, then it follows from \eqref{eq1.3} and the convexity of $\Psi$
that
    \begin{equation}\label{eq4.15}
\Psi(v_n)=\Psi\left(\frac{u_n^+}{\|u_n^+\|_{L^2}}\right)\geq \Psi(u_n^+) \geq
2\Psi\left(\frac{u_n}{2}\right)-\Psi(u_n^-)\geq
2^{1-q}\Psi(u_n)-\Psi(u_n^-).
\end{equation}
By Sobolev inequality, Lemma \ref{Lem4.3} and \eqref{eq4.13}, for
$n$ large, we infer that
    \begin{equation}\label{eq4.16}
\Psi(u_n^-)\leq C\|u_n^-\|^p\leq C \|(H_0-mI)u_n\|_{L^2}^p\leq
2^{-q}\Psi(u_n).
\end{equation}
Thus, \eqref{eq4.15} and \eqref{eq4.16} yields
    \begin{equation}\label{eq4.17}
    \Psi(v_n)\geq 2^{-q}\Psi(u_n).
\end{equation}
On the other hand, we have
    \begin{equation}\label{eq4.18}
    \begin{split}
    \|v_n\|^2-m&=((H_0-mI)v_n,v_n)_{L^2}
        =\frac{((H_0-mI)u_n^+,u_n^+)_{L^2}}{\|u_n^+\|_{L^2}^2}\\
        &\leq 4\big[|((H_0-mI)u_n,u_n)_{L^2}|+((H_0-mI)u_n,u_n^-)_{L^2}\big]
        \\
        &\leq 4\big[|((H_0-mI)u_n,u_n)_{L^2}
        |+\|(H_0-mI)u_n\|_{L^2}\big].
    \end{split}
\end{equation}
Since $L_n$ is injective, jointly with \eqref{eq4.17} and
\eqref{eq4.18}, one has
    \begin{equation*}
        \sup_{v\in S_n^k}\left(\|v\|^2-m\right)\leq 4\sup_{\zeta\in Z^1}\|(H_0-mI)L_n \zeta\|_{L^2}+4\sup_{\zeta\in Z^1}\big|((H_0-mI)L_n \zeta,L_n \zeta)_{L^2}\big|
\end{equation*}
and
    \begin{equation*}
    \inf_{v\in S_n^k}\Psi(v_n)\geq 2^{-q}\inf _{\zeta\in Z^1}\Psi\left(L_n \zeta\right).
\end{equation*}
Hence, Lemma \ref{Lem4.3} implies the conclusion of Lemma
\ref{Lem4.4} holds.
\end{proof}

\begin{Lem}\label{Lem4.5}
    For any $k\in \mathbb{N}$ and $a>0$ small enough, we have
    \begin{equation*}
        b_k<\frac{ma^2}{2}.
    \end{equation*}
\end{Lem}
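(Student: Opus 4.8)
The plan is to exhibit, for each fixed $k$, an explicit $k$-dimensional linear subspace of $X_a$ whose image under the sphere map lies in a sublevel set of $J$ below $ma^2/2$, and then invoke the genus property $\gamma(\partial\Omega)=k$ for symmetric bounded neighbourhoods of $0$ in $\mathbb{R}^k$ to conclude $b_k<ma^2/2$. Concretely, I would take the subspaces $X_n^k\subset E^+$ produced by Lemma~\ref{Lem4.4} and rescale their unit spheres $S_n^k$ by the factor $a$, setting $A_n^k:=a\,S_n^k\subset S_a\cap E^+$. By Lemma~\ref{Lem4.4}, $X_n^k\subset V$ for $n$ large, so $A_n^k\subset V\cap S_a$, and each $A_n^k$ is a closed symmetric subset homeomorphic (via an odd map) to $\mathbb{S}^{k-1}$; hence $\gamma(A_n^k)\geq k$ by property $(iv)$ of the genus, so $A_n^k\in\Gamma_k$ once we also check $A_n^k\subset X_a$.

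The heart of the argument is the estimate on $\sup_{v\in A_n^k}J(v)$. For $v=a\,\tilde v$ with $\tilde v\in S_n^k$, I would use $J(v)\le \tfrac12\|v\|^2-\Psi(v)$ together with $v\in M(v)$ (so $J(v)\ge I(v)$ is not needed here, only the upper bound coming from the definition $J=I\circ G$ and $G(v)\in M(v)$ giving $J(v)\le\sup_{u\in M(v)}I(u)=I(G(v))$; in fact the cleanest route is the inequality $J(v)\le\tfrac12\|v\|^2-\Psi(G(v))$ combined with convexity of $\Psi$ as in Lemma~\ref{Lem2.3}, or more simply the bound $J(v)\le\tfrac12\|v\|^2-c\,\Psi(v)$ for a dimensional constant $c>0$). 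Writing $\|v\|^2=a^2\|\tilde v\|^2$ and $\Psi(v)=\Psi(a\tilde v)\ge a^q\Psi(\tilde v)$ (for $a\le1$, using \eqref{eq1.3}) we get, after using $m\|\tilde v\|_{L^2}^2=m$,
\begin{equation*}
J(v)\le \frac{a^2}{2}\|\tilde v\|^2 - c\,a^q\,\Psi(\tilde v)
= \frac{ma^2}{2} + \frac{a^2}{2}\bigl(\|\tilde v\|^2-m\bigr) - c\,a^q\,\Psi(\tilde v).
\end{equation*}
By Lemma~\ref{Lem4.4}, $\sup_{\tilde v\in S_n^k}(\|\tilde v\|^2-m)$ is bounded by $\varepsilon_n\,\inf_{\tilde v\in S_n^k}\Psi(\tilde v)$ with $\varepsilon_n\to0$, so choosing $n=n(k)$ large enough makes $\tfrac{a^2}{2}(\|\tilde v\|^2-m)\le \tfrac12 c\,a^q\,\Psi(\tilde v)$ uniformly on $S_n^k$ (here one also needs $\Psi(\tilde v)>0$ on $S_n^k$, which is exactly Lemma~\ref{Lem4.2}). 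Then $J(v)\le \tfrac{ma^2}{2}-\tfrac12 c\,a^q\inf_{S_n^k}\Psi(\tilde v)<\tfrac{ma^2}{2}$, and taking the supremum over $v\in A_n^k$ and then the infimum over $\Gamma_k$ gives $b_k<ma^2/2$.

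Before concluding I would need to verify the membership $A_n^k\subset X_a$, i.e.\ that $\|v\|^2\le(m+a^{(p-2)/2})a^2$ for $v\in A_n^k$: this follows since $\|v\|^2=a^2\|\tilde v\|^2\le a^2(m+\varepsilon_n\inf\Psi)$ and, for $a$ small and $n$ large, $\varepsilon_n\inf_{S_n^k}\Psi(\tilde v)\le a^{(p-2)/2}$ by Lemma~\ref{Lem4.4} again (the ratio tends to $0$, so in particular $\varepsilon_n\inf\Psi\to0$). I expect the main obstacle to be bookkeeping the order of quantifiers: $a$ must be fixed small first (as in the statement, $a>0$ is fixed throughout Section~4), then for each $k$ one selects $n=n(k,a)$ large enough so that both the sublevel estimate and the inclusion $A_n^k\subset X_a\subset V\cap S_a$ hold; one must make sure Lemma~\ref{Lem4.4}'s ``$\to0$ as $n\to\infty$'' is applied with $a$ already frozen, which is fine since that lemma is purely about the subspaces and does not involve $a$. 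The only genuinely delicate point is ensuring $\Psi(\tilde v)$ is bounded below by a positive constant on $S_n^k$ uniformly in $n$ so that the competition between the $O(\varepsilon_n)$ term and the $\Psi$ term resolves in our favour — but that is precisely the content of Lemmas~\ref{Lem4.2} and~\ref{Lem4.3}(ii), so no new work is required.
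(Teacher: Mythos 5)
Your proposal is correct and follows essentially the same route as the paper: rescale the spheres $S_n^k$ from Lemma \ref{Lem4.4} to $L^2$-norm $a$, apply the Lemma \ref{Lem2.3}-type estimate $J(az)\le \tfrac{a^2}{2}\|z\|^2-2^{1-2q}a^q\Psi(z)$, and invoke the ratio condition of Lemma \ref{Lem4.4} (with $a$ fixed first, then $n$ large) to push the supremum below $\tfrac{ma^2}{2}$, checking $a S_n^k\subset X_a$ along the way. The only difference is cosmetic: you make the genus step $\gamma(aS_n^k)\ge k$ explicit, which the paper leaves implicit.
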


\begin{proof}
    Let
        $$
S_n^k(a):=\left\{u \in X_n^k \mid\|u\|_{L^2}=a\right\}.
    $$
We shall prove that for any $a>0$ small enough, there exists
$n(a)\in \mathbb{N}$ such that
\begin{equation*}
\sup_{v\in S_n^k(a)}J(v)<\frac{ma^2}{2},~\forall n\geq n(a).
\end{equation*}
Indeed, for any $v_n\in S_n^k(a)$, there exists $z_n\in S_n^k$ with
$v_n=az_n$. In view of Lemma \ref{Lem4.3}, $\{z_n\}$ is uniformly
bounded in $Z^1$. Therefore, similar to the argument of
Lemma \ref{Lem2.3}, we get for $a$ small enough
        \begin{equation*}
J(v_n)\leq \frac{a^2}{2}\|z_n\|^2-2^{1-2q}a^q\Psi(z_n).
    \end{equation*}
Thus,
        \begin{equation*}
\sup_{v\in S_n^k(a)}J(v)\leq  \frac{a^2}{2}\sup_{v\in
S_n^k}\|v\|^2-2^{1-2q}a^q\inf_{v\in S_n^k}\Psi(v).
\end{equation*}
By Lemma \ref{Lem4.4}, there exists $n(a)\in \mathbb{N}$ such that,
for any $n_0\geq n(a)$, there holds
    \begin{equation*}
\sup_{v\in S_{n_0}^k(a)}J(v)<\frac{ma^2}{2}.
    \end{equation*}
Moreover, we also have $S_{n_0}^k(a)\subset X_a$ for $a$ small enough. This completes the
proof.
\end{proof}

\medskip
\noindent{\bf The proof of Theorem 1.2} The multiplicity of Theorem
\ref{Thm1.2} follows from Theorem A, Lemma \ref{Lem3.1} and Lemma
\ref{Lem4.5}. Similar to the arguments of Theorem \ref{Thm1.1}, we
obtain the bifurcation conclusion.






\section*{Acknowledgment}
The second author was supported by NSFC
 12201625. The last author was supported by China Postdoctoral Science Foundation 2022M711853.

\noindent {Annouar Bahrouni\\
Mathematics Department,\\
University of Monastir, Faculty of Sciences, 5019 Monastir, Tunisia\\
e-mail:  bahrounianouar@yahoo.fr}
\medskip
\\
\noindent {Qi Guo\\
School of Mathematics,\\
Renmin University of China, Beijing, 100872, P.R. China\\
e-mail: qguo@ruc.edu.cn}
\medskip
\\
\noindent {Hitchem Hajaiej\\
Department of Mathematics, California State University, Los Angeles,\\
\small 5151 State University Drive,
Los Angeles, CA 90032, USA\\
e-mail: hhajaiej@calstatela.edu}
\medskip
\\
\noindent{Yuanyang Yu\\
Department of Mathematical Sciences, \\
Tsinghua University, Beijing, 100084,  P.R.China\\
e-mail: yyysx43@163.com}

\end{document}